\DeclareMathDelimiterSet{\scal}[2]{
  \selectdelim[l]< {#1} \mathpunct{}\selectdelim[p]| {#2} \selectdelim[r]>
}
\newcommand{\menge}[2]{
  \bigl\{{#1}\mid{#2}\bigr\}
}
\DeclareMathDelimiterSet{\Menge}[2]{
  \selectdelim[l]\{{#1}\selectdelim[m]|{#2}\selectdelim[r]\}
}
\newcommand*\Cdot{{\mkern 1.6mu\cdot\mkern 1.6mu}}
\DeclareMathDelimiterSet{\killing}[2]{
  \selectdelim[l]( {#1} \mathpunct{}\selectdelim[p]| {#2} \selectdelim[r])
}
\newcommand{\AD}{\mathscr{A}}
\newcommand{\DD}{\mathscr{D}}
\newcommand{\SX}{\mathscr{S}}
\newcommand{\ad}{\mathscr{a}}
\newcommand{\bd}{\mathscr{b}}
\newcommand{\CC}{\mathbb{C}}
\newcommand{\KB}{\mathbb{K}}
\newcommand{\HB}{\mathbb{H}}
\newcommand{\NN}{\mathbb{N}}
\newcommand{\RR}{\mathbb{R}}
\newcommand{\SB}{\mathbb{S}}
\newcommand{\HH}{\mathcal{H}}
\newcommand{\GG}{\mathcal{G}}
\newcommand{\UU}{\mathcal{U}}
\newcommand{\XX}{\mathcal{X}}
\newcommand{\HS}{\mathsf{H}}
\newcommand{\US}{\mathsf{U}}
\newcommand{\PS}{\mathsf{P}}
\newcommand{\SD}{\mathsf{S}}
\newcommand{\SO}{\mathsf{SO}}
\newcommand{\ts}{\mathsf{t}}
\newcommand{\sd}{\mathsf{s}}
\newcommand{\FH}{\mathfrak{H}}
\newcommand{\BE}{\EuScript{B}}
\newcommand{\FF}{\EuScript{F}}
\newcommand{\ii}{\mathrm{i}}    % imaginary number
\newcommand{\ee}{\mathrm{e}}    % exponent function
\newcommand{\pinf}{{+}\infty}
\newcommand{\minf}{{-}\infty}
\newcommand{\emp}{\varnothing}
\newcommand{\exi}{\exists\,}
\newcommand{\weakly}{\rightharpoonup}
\renewcommand{\leq}{\leqslant}
\renewcommand{\geq}{\geqslant}
\newcommand\restr[2]{{% we make the whole thing an ordinary symbol
  \left.\kern-\nulldelimiterspace % automatically resize the bar with \right
    #1 % the function
  \right|_{#2} % this is the delimiter
  }
}
\newcommand{\zeroun}{\intv[o]{0}{1}}
\newcommand{\rzeroun}{\intv[l]{0}{1}}
\newcommand{\RXX}{\overline{\RR}}
\newcommand{\RX}{\intv[l]{\minf}{\pinf}}
\newcommand{\RP}{\RR_+}
\newcommand{\RPP}{\intv[o]{0}{\pinf}}
\newcommand{\RPX}{\intv{0}{\pinf}}
\DeclareMathOperator{\aff}{aff}
\DeclareMathOperator{\ran}{range}
\DeclareMathOperator{\intdom}{int\,dom}
\DeclareMathOperator{\sign}{sign}
\DeclareMathOperator{\Rank}{Rank}
\newcommand{\Sum}{\displaystyle\sum}
\newcommand{\Id}{\mathrm{Id}}
\DeclareMathOperator{\conv}{conv}
\DeclareMathOperator{\dom}{dom}
\DeclareMathOperator{\env}{env}
\DeclareMathOperator{\gra}{gra}
\DeclareMathOperator{\Prox}{Prox}
\DeclareMathOperator{\Proj}{Proj}
\DeclareMathOperator{\dist}{dist}
\newcommand{\minimize}[2]{
  \underset{\substack{{#1}}}{\operatorname{minimize}}\;\;#2
}
\DeclareMathOperator{\Argmin}{Argmin}
\newcommand*{\tran}{^{\mkern-1.6mu\mathsf{T}}}
\renewcommand{\Re}{\operatorname{Re}}
\DeclareMathOperator{\tra}{trace}
\DeclareMathOperator{\Tra}{Trace}
\DeclareMathOperator{\Diag}{Diag}
\def\upintkern@{\mkern-7mu\mathchoice{\mkern-3.5mu}{}{}{}}
\def\upintdots@{\mathchoice{\mkern-4mu\@cdots\mkern-4mu}%
{{\cdotp}\mkern1.5mu{\cdotp}\mkern1.5mu{\cdotp}}%
{{\cdotp}\mkern1mu{\cdotp}\mkern1mu{\cdotp}}%
{{\cdotp}\mkern1mu{\cdotp}\mkern1mu{\cdotp}}}
\DeclareFontFamily{OMX}{mdbch}{}
\DeclareFontShape{OMX}{mdbch}{m}{n}{ <->s * [0.9]  mdbchr7v }{}
\DeclareFontShape{OMX}{mdbch}{b}{n}{ <->s * [0.9]  mdbchb7v }{}
\DeclareFontShape{OMX}{mdbch}{bx}{n}{<->ssub * mdbch/b/n}{}
\DeclareSymbolFont{uplargesymbols}{OMX}{mdbch}{m}{n}
\DeclareMathSymbol{\upintop}{\mathop}{uplargesymbols}{82}
\DeclareMathSymbol{\upointop}{\mathop}{uplargesymbols}{"48}
\renewcommand{\int}{\DOTSI\upintop\ilimits@}
\renewcommand{\oint}{\DOTSI\upointop\ilimits@}
\begin{document}

% manuscript's info
\author{H\`oa T. B\`ui}
\affil{%
  Curtin University
  \affilcr
  Centre for Optimisation and Decision Science
  \affilcr
  Kent Street, Bentley, Western Australia 6102, Australia
  \affilcr
  \email{hoa.bui@curtin.edu.au}
}

\author{Minh N. B\`ui}
\affil{%
  University of Graz
  \affilcr
  Department of Mathematics and Scientific Computing, NAWI Graz
  \affilcr
  Heinrichstra\ss{}e 36, 8010 Graz, Austria
  \affilcr
  \email{minh.bui@uni-graz.at}
}

\author{Christian Clason}
\affil{%
  University of Graz
  \affilcr
  Department of Mathematics and Scientific Computing, NAWI Graz
  \affilcr
  Heinrichstra\ss{}e 36, 8010 Graz, Austria
  \affilcr
  \email{c.clason@uni-graz.at}
}

\title{Convex Analysis in\\ Spectral Decomposition Systems%
    \thanks{%
        Corresponding author: M.~N.~B\`ui.
        The work of H.~T.~B\`ui was supported by the Australian
        Research Council (grant number DE260100027).
        This research was funded in whole or in part by the
        Austrian Science Fund (FWF)
        \href{https://dx.doi.org/10.55776/F100800}{10.55776/F100800}.%
    }
}

\date{~}

\thispagestyle{plain.scrheadings}

\maketitle

\begin{abstract}
This work is concerned with the convex analysis of functions
defined on (not necessarily finite-dimensional) Hilbert spaces
whose values depend solely on a
certain ``spectrum'' of the arguments, a class we term ``spectral
functions.'' We propose a notion
of a spectral decomposition system which brings together a wide
array of settings underlying important applications such as
Fourier-phase-invariant functions, mixed-norm regularization, and
functions of eigenvalues or (signed) singular values of matrices.
We are particularly motivated by algorithmic requirements
for evaluating convex analytical objects.
Thus, a central contribution is a novel reduced minimization
principle that enables the constructive reduction of minimization
problems involving spectral functions to those of the simpler
associated invariant functions. This result is then leveraged to
explicitly evaluate the conjugates, subgradients, and
set-valued Bregman proximity operators of spectral functions.
\end{abstract}

\begin{keywords}
  Bregman proximity operator;
  conjugation;
  convexity;
  convex subdifferential;
  spectral decomposition system;
  spectral function.
\end{keywords}

\newpage

\section{Introduction}

A wide array of practically relevant optimization problems
are defined on (not necessarily finite-dimensional) Hilbert
spaces of matrices, operators, or functions instead of vectors.
In these settings, a frequently encountered class of functions is
those whose values depend solely on a certain \emph{spectrum} of
their arguments, which we shall term \emph{spectral functions}.
Instances of such spectral functions are ubiquitous in
application areas as diverse as
phase retrieval \cite{Bauschke02,Luke02},
robust matrix estimation \cite{Benfenati20},
matrix completion \cite{Candes10},
signal processing \cite{Chan16},
image recovery \cite{Chen19},
conic programming in Euclidean Jordan algebras \cite{Coey23},
and nonlinear elasticity \cite{Rosakis98}.

Structurally, such a spectral function $\Phi$ can be represented
as the composition of a ``simpler'' \emph{associated invariant function}
$\varphi$ defined on another Hilbert space and a
\emph{spectral mapping} $\gamma$.
Thus, of particular importance in these applications are
constructive formulae to evaluate (convex) subgradients and
(Bregman) proximity operators of $\Phi$ at a given point $X$
through a systematic reduction:
\begin{enumerate}[label={\normalfont\arabic*\autodot}]
\item
Compute the corresponding object of the simpler associated invariant
function $\varphi$ at $\gamma(X)$.
\item
Lift the result back to the original space through a suitable
embedding operator to obtain the desired result.
\end{enumerate}
This reduction-lifting mechanism has received
significant attention in the literature, as these formulae --
especially for proximity operators -- serve as
the primary algorithmic building blocks for modern first-order
nonsmooth optimization methods.
The two most notable settings are convex spectral functions of
eigenvalues of Hermitian matrices or singular values of
rectangular matrices
\cite{Livre1,Beck17,Benfenati20,Chan16,Drus18,Lewis95,Lewis96a,Mordukhovich22}.
In the setting of Euclidean Jordan algebras,
a general formula for subgradients of spectral functions of
eigenvalues is established in \cite{Baes07}, while the
corresponding one for
proximity operators remains unaddressed.

However, each of these works treats a specific setting in
isolation, and requires setting-specific techniques to derive the
desired formulae for convex analytical objects. To provide a
unified perspective, the framework of a
\emph{normal decomposition system}
\cite{Lewis96b} was developed to study the convex analysis of
spectral functions across several settings,
which captures in particular the cases of
Hermitian and rectangular matrices.
Nevertheless, this framework does not encompass
Euclidean Jordan algebras in general \cite{Orlitzky22}, nor
does it provide a general formula for proximity operators.
While the more recent Fan--Theobald--von~Neumann framework
\cite{Gowda19} offers a wider reach,
its abstract nature allows only for nonconstructive
characterization of subdifferentials, leaving constructive formulae
and proximity operators unaddressed.

It is therefore the goal of this work to:
\begin{enumerate}[label={\normalfont\bfseries G\arabic*\autodot}]
\item
Develop a general framework of \emph{spectral decomposition system}
(see \cref{d:2} for a precise definition)
that encompasses a broad spectrum of existing frameworks, such as
Hermitian matrices, rectangular matrices, normal decomposition
systems, Euclidean Jordan algebras, as well as infinite-dimensional
constructs such as Fourier-phase-invariant and block-radial
functions.
\item
\label{G2}
Central to our framework is the explicit incorporation of a family
of embedding operators and the spectral decomposition property,
which are motivated by the various decomposition results
for matrices.
Together, these structural ingredients enable a \emph{reduced
minimization principle}
(\cref{t:2}), which provides a systematic lifting mechanism to
evaluate the conjugate (\cref{p:8}),
subgradients (\cref{p:9,p:32}), and Bregman proximity operators
(\cref{t:4}) of spectral functions.
Particularly,
we develop for the first time a formula that fully
describes -- and not merely a selection of -- the
set-valued Bregman proximity operators of nonconvex spectral
functions.
Our approach yields constructive formulae that are readily
implementable for optimization algorithms, analogous to
the classical results for matrices.
\end{enumerate}
We will discuss how our framework relates to existing works, in
particular to the normal decomposition system and
Fan--Theobald--von~Neumann frameworks, in connection to several
individual results throughout this paper.

This work is structured as follows. In the next \cref{sec:2}, we
present in \cref{d:2} the precise definition of a spectral
decomposition system and illustrate its breadth through a diverse
range of examples, including those settings previously studied in
isolation.
\Cref{sec:3} collects essential technical results on spectral
mappings and spectral-induced ordering mappings that underpin
the subsequent analysis.
\Cref{sec:4} is devoted to the
reduced minimization principle (\cref{t:2}) and its applications to
evaluating the conjugate and convex subdifferential of a
spectral function (\cref{p:8} and \cref{p:9}, respectively).
Additionally, we characterize the Gateaux and the Fr\'echet
differentiability of convex spectral functions in \cref{p:32}.
Finally, \cref{sec:5} establishes a constructive formula for the
Bregman proximity operators of (not necessarily convex) spectral
functions.

\section{Spectral decomposition systems}
\label{sec:2}

To motivate our definition, recall from the introduction that
a wide array of important optimization problems involve a function
$\Phi\colon\FH\to\RX$ defined on a real Hilbert space $\FH$ whose
values depend only on a certain spectral mapping
$\gamma\colon\FH\to\XX$ taking values in some simpler Euclidean
space $\XX$ in the sense that
\begin{equation}
\label{e:1}
(\forall X\in\FH)(\forall Y\in\FH)\quad
\gamma(X)=\gamma(Y)
\quad\Rightarrow\quad
\Phi(X)=\Phi(Y).
\end{equation}
In such applications, a central computational theme is to reduce
the evaluations of objects associated to $\Phi$ --
such as conjugate, subdifferential, and proximity operators --
to that of an associated, much simpler invariant function
$\varphi\colon\XX\to\RX$ and then lift the corresponding result
from $\XX$ to $\FH$ via a suitable embedding operator.
It would be illustrative to ground the discussion in a well-known
setting.
Consider the case where
$\FH=\SB^N$ is the space of $N\times N$ symmetric real matrices,
$\XX=\RR^N$, and $\gamma$ is the mapping that outputs the
eigenvalues of a matrix $X\in\FH$ in decreasing order.
In this setting, functions $\Phi$ satisfying \cref{e:1} are
precisely weakly unitarily invariant functions, and can be
represented as
\begin{equation}
\Phi=\varphi\circ\gamma,
\end{equation}
for some symmetric function $\varphi\colon\RR^N\to\RX$,
and the subdifferential of $\Phi$ is expressed as
(see \cref{n:1})
\begin{equation}
(\forall X\in\SB^N)\quad
\partial\Phi(X)=\menge{U(\Diag
y)U\tran}{y\in\partial\varphi\brk!{\gamma(X)},\,\,
U\in\US^N(\RR),\,\,X=U\brk!{\Diag\gamma(X)}U\tran}.
\end{equation}
This formula illustrates a central computational theme:
Finding an element in $\partial\Phi(X)$
is reduced to finding an element in $\partial\varphi(\gamma(X))$,
and lifting the result from $\RR^N$ to $\SB^N$
through a suitable embedding operator
\begin{equation}
\Lambda_U\colon\RR^N\to\SB^N\colon
x\mapsto U(\Diag x)U\tran
\end{equation}
that depends on a spectral decomposition of $X$.
Therefore, to be \emph{computationally relevant},
our framework must include the following structurally essential
ingredients, revealed through a closer inspection of the literature
cited in the introduction:
\begin{enumerate}
\item
A family $(\Lambda_{\ad})_{\ad\in\AD}$ of linear isometries
from $\XX$ to $\FH$ that allows for a ``spectral
decomposition'' of elements of $\FH$ through the spectral
mapping $\gamma$ in the sense that every $X\in\FH$ can be
decomposed as $X=\Lambda_{\ad}\gamma(X)$ for
some $\ad\in\AD$. For instance, in the motivating example
$\FH=\SB^N$ and $\XX=\RR^N$, this property is simply the eigenvalue
decomposition of matrices in $\FH$ given by
\begin{equation}
\Lambda_U\colon\XX\to\FH\colon
x\mapsto U(\Diag x)U\tran,
\end{equation}
where $U$ is an orthogonal matrix.
\item
A von~Neumann-type inequality relating the scalar product
of elements in $\FH$ to that of their spectra in $\XX$.
\end{enumerate}
It turns out that these must also satisfy some natural
invariance conditions with respect to the action of a certain set
$\SD$ on $\XX$ for the analysis; for instance, in the example of
symmetric matrices, $\SD$ is the set of permutation matrices.
Presenting a precise definition requires the introduction
of some notation. First, the scalar product and the associated norm
of a real Hilbert space are denoted by $\scal{\Cdot}{\Cdot}$ and
$\norm{\Cdot}$, respectively.
Next, the following notion of a set action
\cite[Chapitre~1, D\'{e}finition~{\S}3.1]{Bourbaki07}
on a Hilbert space will be central to our work.

\begin{definition}
\label{d:1}
Let $\HH$ be a real Hilbert space and let $\SD$ be a nonempty
set which acts on $\HH$ by linear isometries in the sense that
$\HH$ is endowed with a mapping
\begin{equation}
\SD\times\HH\to\HH\colon(\sd,x)\mapsto\sd\cdot x
\end{equation}
such that, for every $\sd\in\SD$,
$\HH\to\HH\colon x\mapsto\sd\cdot x$ is a linear isometry.
Then:
\begin{enumerate}
\item
\label{d:1i}
Given a set $\UU$, a mapping $\Phi\colon\HH\to\UU$ is said to be
\emph{$\SD$-invariant} if
\begin{equation}
(\forall x\in\HH)(\forall\sd\in\SD)\quad
\Phi(\sd\cdot x)=\Phi(x).
\end{equation}
\item
\label{d:1ii}
A subset $C$ of $\HH$ is said to be
\emph{$\SD$-invariant} if its indicator function
\begin{equation}
\iota_C\colon\HH\to\RX\colon x\mapsto
\begin{cases}
0&\text{if}\,\,x\in C,\\
\pinf&\text{otherwise},
\end{cases}
\end{equation}
is $\SD$-invariant or, equivalently,
\begin{equation}
(\forall x\in\HH)(\forall\sd\in\SD)\quad
x\in C
\quad\Leftrightarrow\quad
\sd\cdot x\in C.
\end{equation}
\end{enumerate}
\end{definition}

We now introduce the abstract framework we will work in.

\begin{definition}[spectral decomposition system]
\label{d:2}
Let $\FH$ and $\XX$ be real Hilbert spaces,
let $\SD$ be a nonempty set which acts on $\XX$ by linear
isometries, let $\gamma\colon\FH\to\XX$, and suppose that
$(\Lambda_{\ad})_{\ad\in\AD}$ is a family of linear
isometries from $\XX$ to $\FH$.
The tuple
$\mathfrak{S}=(\XX,\SD,\gamma,(\Lambda_{\ad})_{\ad\in\AD})$
is said to be a \emph{spectral decomposition system} for $\FH$ if
the following are satisfied:
\begin{enumerate}[label={\normalfont[\Alph*]}]
\item
\label{d:2a}
There exists an $\SD$-invariant mapping $\tau\colon\XX\to\XX$ such
that
\begin{equation}
\brk[s]!{\,
(\forall x\in\XX)(\exi\sd\in\SD)\,\,
x=\sd\cdot\tau(x)
\,}
\quad\text{and}\quad
\brk[s]!{\,
(\forall\ad\in\AD)\,\,
\gamma\circ\Lambda_{\ad}=\tau
\,}.
\end{equation}
\item
\label{d:2b}
$(\forall X\in\FH)(\exi\ad\in\AD)$
$X=\Lambda_{\ad}\gamma(X)$.
\item
\label{d:2c}
$(\forall X\in\FH)(\forall Y\in\FH)$
$\scal{X}{Y}\leq\scal{\gamma(X)}{\gamma(Y)}$.
\end{enumerate}
If $\mathfrak{S}=(\XX,\SD,\gamma,(\Lambda_{\ad})_{\ad\in\AD})$
is a spectral decomposition system for $\FH$, then:
\begin{itemize}
\item
The mapping $\gamma$ is called the \emph{spectral mapping} of the system
$\mathfrak{S}$.
\item
The mapping $\tau$ in property~\cref{d:2a} is called the
\emph{spectral-induced ordering mapping} of the system $\mathfrak{S}$.
(The motivation for this term will become apparent
from the concrete scenarios described in \cref{ex:3,ex:4,ex:5,ex:6}.)
\item
We set
\begin{equation}
\label{e:ax}
(\forall X\in\FH)\quad
\AD_X=\menge{\ad\in\AD}{X=\Lambda_{\ad}\gamma(X)}.
\end{equation}
Property~\cref{d:2b} ensures that the sets $(\AD_X)_{X\in\FH}$
are nonempty.
\item
Given $X\in\FH$, the vector $\gamma(X)$ is called the \emph{spectrum} of
$X$ with respect to $\mathfrak{S}$ and, for every $\ad\in\AD_X$,
the identity
\begin{equation}
X=\Lambda_{\ad}\gamma(X)
\end{equation}
is called a \emph{spectral decomposition} of $X$ with respect to
$\mathfrak{S}$.
\end{itemize}
\end{definition}

\begin{remark}
\label{r:6}
Let us discuss the connection between our framework and two
existing abstract frameworks.
\begin{enumerate}
\item
\label{r:6i}
The work \cite{Lewis96b} proposes the notion of a
normal decomposition system (see \cref{ex:2}) to unify the convex
analysis of spectral functions across several finite-dimensional
settings, such as Hermitian and rectangular matrices.
However, as demonstrated in \cite{Orlitzky22}, this framework
does not encompass the Euclidean Jordan algebra framework
of \cite{Baes07,Lourenco20,Sun08}
(see \cref{ex:3}) in general. As will be seen in \cref{ex:2,ex:3},
every normal decomposition system
and every Euclidean Jordan algebra induces a spectral decomposition
system. Our framework thereby extends the application scope of
\cite{Lewis96b}.
\item
\label{r:6ii}
The work \cite{Gowda19} proposes the notion
of a Fan--Theobald--von~Neumann system
as a wide-reaching framework to study optimization
problems involving spectral functions and spectral sets.
More precisely, a Fan--Theobald--von~Neumann system is a triple
$(\FH,\XX,\gamma)$ in which $\FH$ and $\XX$ are real Hilbert spaces,
and $\gamma\colon\FH\to\XX$ is a mapping that satisfies
\begin{equation}
(\forall X\in\FH)(\forall Y\in\FH)\quad
\max\menge{\scal{X}{Z}}{Z\in\FH\,\,\text{and}\,\,\gamma(Z)=\gamma(Y)}
=\scal{\gamma(X)}{\gamma(Y)}.
\end{equation}
However, its abstract nature allows only for the
\emph{characterization} of objects through a
\emph{nonconstructive} trace equality
\begin{equation}
\scal{X}{Y}=\scal{\gamma(X)}{\gamma(Y)}
\end{equation}
rather than providing an explicit construction.
While it is not immediately apparent from \cref{d:2},
it can be shown by using the results of \cref{sec:3}
that every spectral decomposition system
$(\XX,\SD,\gamma,(\Lambda_{\ad})_{\ad\in\AD})$ for a real Hilbert
space $\FH$ yields a Fan--Theobald--von~Neumann system
$(\FH,\XX,\gamma)$. Nevertheless,
the explicit incorporation of the embedding operators
$(\Lambda_{\ad})_{\ad\in\AD}$ and the spectral decomposition
property \cref{d:2b} in \cref{d:2}
circumvents the aforementioned computational limitation of the
Fan--Theobald--von~Neumann framework,
as discussed in \cref{G2};
see \cref{r:8}\,\cref{r:8ii} and \cref{r:5}\,\cref{r:5i}
for further discussions.
Finally, the examples in this section illustrate that our
framework is sufficiently general to encompass and strengthen
existing applications; see also
\cref{ex:86,ex:8bis-prox,ex:14}.
\end{enumerate}
\end{remark}

Having formally introduced the notion of a spectral decomposition
system, we now define and characterize the class of spectral
functions $\Phi\colon\FH\to\RXX$ within this framework, where
$\RXX=\intv{\minf}{\pinf}$. The following result shows that these
functions are precisely those of the form $\varphi\circ\gamma$,
where $\varphi\colon\XX\to\RXX$ is an $\SD$-invariant function.

\begin{proposition}
\label{p:5}
Let $\FH$ be a real Hilbert space,
let $\mathfrak{S}=(\XX,\SD,\gamma,(\Lambda_{\ad})_{\ad\in\AD})$
be a spectral decomposition system for $\FH$,
and let $\Phi\colon\FH\to\RXX$.
Then the following are equivalent:
\begin{enumerate}
\item
\label{p:5i}
$\Phi$ is a \emph{spectral function} in the sense that
\begin{equation}
\label{e:spec}
(\forall X\in\FH)(\forall Y\in\FH)\quad
\gamma(X)=\gamma(Y)
\quad\Rightarrow\quad
\Phi(X)=\Phi(Y).
\end{equation}
\item
\label{p:5ii}
$(\forall\ad\in\AD)(\forall\bd\in\AD)$
$\Phi\circ\Lambda_{\ad}=\Phi\circ\Lambda_{\bd}$.
\item
\label{p:5iii}
There exists an $\SD$-invariant function
$\varphi\colon\XX\to\RXX$ such that
$\Phi=\varphi\circ\gamma$.
\end{enumerate}
Moreover, if \cref{p:5iii} holds, then $(\forall\ad\in\AD)$
$\varphi=\Phi\circ\Lambda_{\ad}$.
\end{proposition}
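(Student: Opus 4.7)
The plan is to prove the cyclic chain \cref{p:5i}\,$\Rightarrow$\,\cref{p:5ii}\,$\Rightarrow$\,\cref{p:5iii}\,$\Rightarrow$\,\cref{p:5i}, and then to derive the \emph{moreover} clause as an immediate consequence of the explicit form of $\varphi$ that the construction in the substantive step will supply.

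Two of the three implications are essentially tautological. For \cref{p:5i}\,$\Rightarrow$\,\cref{p:5ii}, given any $\ad,\bd\in\AD$ and $x\in\XX$, property~\cref{d:2b} in \cref{d:2} yields $\gamma(\Lambda_{\ad}x)=\tau(x)=\gamma(\Lambda_{\bd}x)$, so hypothesis \cref{p:5i} immediately forces $\Phi(\Lambda_{\ad}x)=\Phi(\Lambda_{\bd}x)$. For \cref{p:5iii}\,$\Rightarrow$\,\cref{p:5i}, one simply observes that $\Phi=\varphi\circ\gamma$ depends on $X$ only through $\gamma(X)$.

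The substantive step is \cref{p:5ii}\,$\Rightarrow$\,\cref{p:5iii}. The plan is to fix any $\ad_0\in\AD$ and take $\varphi=\Phi\circ\Lambda_{\ad_0}$ as the natural candidate, then verify both required properties. The factorization $\Phi=\varphi\circ\gamma$ comes out cleanly: for any $X\in\FH$, property~\cref{d:2c} in \cref{d:2} furnishes $\bd\in\AD$ with $X=\Lambda_{\bd}\gamma(X)$, and applying \cref{p:5ii} to the pair $(\bd,\ad_0)$ delivers $\Phi(X)=\Phi(\Lambda_{\bd}\gamma(X))=\Phi(\Lambda_{\ad_0}\gamma(X))=\varphi(\gamma(X))$. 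The $\SD$-invariance of $\varphi$ is the one place where a little care is needed, since it is not apparent from the bare definition and must be extracted by feeding the factorization back into itself. Concretely, for $x\in\XX$ and $\sd\in\SD$, the $\SD$-invariance of $\tau$ combined with property~\cref{d:2b} in \cref{d:2} gives $\gamma(\Lambda_{\ad_0}(\sd\cdot x))=\tau(\sd\cdot x)=\tau(x)=\gamma(\Lambda_{\ad_0}x)$, and applying the just-established identity $\Phi=\varphi\circ\gamma$ to the two endpoints collapses this to $\varphi(\sd\cdot x)=\varphi(x)$.

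For the \emph{moreover} clause, assuming \cref{p:5iii} and any $\ad\in\AD$, property~\cref{d:2b} in \cref{d:2} yields $\Phi\circ\Lambda_{\ad}=\varphi\circ\gamma\circ\Lambda_{\ad}=\varphi\circ\tau$, after which the $\SD$-invariance of $\varphi$ together with the inclusion $\tau(x)\in\SD\cdot x$ from \cref{e:1ae1} collapses $\varphi\circ\tau$ to $\varphi$. I do not foresee any real obstacle to this proof; the only subtle point is the brief loop in the implication \cref{p:5ii}\,$\Rightarrow$\,\cref{p:5iii}, where the $\SD$-symmetry of $\tau$ has to be transferred to the candidate $\varphi$ via the factorization rather than being visible at the level of the definition of $\varphi$ itself.
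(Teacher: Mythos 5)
Your proposal is correct and follows essentially the same route as the paper: the same cyclic chain, the same candidate $\varphi=\Phi\circ\Lambda_{\ad_0}$ in the substantive step, and the same use of \cref{e:1ae1}, \cref{e:1ae2} for the \emph{moreover} clause. The only cosmetic difference is that the paper establishes $\SD$-invariance by first showing $\varphi=\varphi\circ\tau$ and invoking the $\SD$-invariance of $\tau$, whereas you feed the factorization back through $\gamma\circ\Lambda_{\ad_0}=\tau$ pointwise; these are the same argument.
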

\begin{proof}
Denote by $\tau$ the spectral-induced ordering mapping of the
system $\mathfrak{S}$. Then $\tau$ is $\SD$-invariant and
\begin{equation}
\label{e:14sd}
\brk[s]!{\,
(\forall x\in\XX)(\exi\sd\in\SD)\,\,
x=\sd\cdot\tau(x)
\,}
\quad\text{and}\quad
\brk[s]!{\,
(\forall\ad\in\AD)\,\,
\gamma\circ\Lambda_{\ad}=\tau
\,}.
\end{equation}

\cref{p:5i}\,$\Rightarrow$\,\cref{p:5ii}:
Let $\ad$ and $\bd$ be in $\AD$. For every $x\in\XX$,
since $\gamma(\Lambda_{\ad}x)=\tau(x)=\gamma(\Lambda_{\bd}x)$,
it results from \cref{e:spec} that
$\Phi(\Lambda_{\ad}x)=\Phi(\Lambda_{\bd}x)$.

\cref{p:5ii}\,$\Rightarrow$\,\cref{p:5iii}:
Fix $\bd\in\AD$ and set $\varphi=\Phi\circ\Lambda_{\bd}$. For every
$X\in\FH$ and every $\ad\in\AD_X$, we have
\begin{equation}
\varphi\brk!{\gamma(X)}
=(\Phi\circ\Lambda_{\bd})\brk!{\gamma(X)}
=(\Phi\circ\Lambda_{\ad})\brk!{\gamma(X)}
=\Phi\brk!{\Lambda_{\ad}\gamma(X)}
=\Phi(X),
\end{equation}
which confirms that $\Phi=\varphi\circ\gamma$.
In turn, we derive from \cref{e:14sd} that
\begin{equation}
\varphi
=\Phi\circ\Lambda_{\bd}
=(\varphi\circ\gamma)\circ\Lambda_{\bd}
=\varphi\circ(\gamma\circ\Lambda_{\bd})
=\varphi\circ\tau.
\end{equation}
Hence, the $\SD$-invariance of $\varphi$ follows from that of $\tau$.

\cref{p:5iii}\,$\Rightarrow$\,\cref{p:5i}:
Clear.

Finally, suppose that \cref{p:5iii} holds and take $\ad\in\AD$.
For every $x\in\XX$, upon choosing $\sd\in\SD$ such that
$x=\sd\cdot\tau(x)$,
we deduce from the identity $\gamma\circ\Lambda_{\ad}=\tau$
and the $\SD$-invariance of $\varphi$ that
$
\Phi(\Lambda_{\ad}x)
=(\varphi\circ\gamma)(\Lambda_{\ad}x)
=\varphi\brk{\gamma(\Lambda_{\ad}x)}
=\varphi\brk{\tau(x)}
=\varphi\brk{\sd\cdot\tau(x)}
=\varphi(x)$.
\end{proof}

In the light of \cref{p:5}, the following notion is well defined.

\begin{definition}
\label{d:3}
Let $\FH$ be a real Hilbert space,
let $\mathfrak{S}=(\XX,\SD,\gamma,(\Lambda_{\ad})_{\ad\in\AD})$
be a spectral decomposition system for $\FH$, and let
$\Phi\colon\FH\to\RXX$ be a spectral function in the sense that
\begin{equation}
(\forall X\in\FH)(\forall Y\in\FH)\quad
\gamma(X)=\gamma(Y)
\quad\Rightarrow\quad
\Phi(X)=\Phi(Y).
\end{equation}
The unique $\SD$-invariant function $\varphi\colon\XX\to\RXX$ such that
$\Phi=\varphi\circ\gamma$ is called the
\emph{invariant function associated with $\Phi$}.
\end{definition}

The remainder of this section is devoted to examples that
illustrate the breadth of the proposed notion of a
spectral decomposition system through a broad range of concrete
settings found in the literature. To this end, we first collect
some further notation that will be used in these examples, also in
the following sections.

\begin{notation}
  \label{n:1}
  Let $M$ and $N$ be strictly positive integers.
  \begin{itemize}
    \item
      $\KB$ denotes one of the following:
      the field $\RR$ of real numbers,
      the field $\CC$ of complex numbers,
      or the skew-field $\HB$ of Hamiltonian quaternions
      (we refer the reader to \cite{Rodman14} for background on
      quaternions).
    \item
      The canonical involution on $\KB$ is denoted by
      $\xi\mapsto\overline{\xi}$, which fixes only the elements of
      $\RR$, and the real part of $\xi\in\KB$ is
      $\Re\xi=(\xi+\overline{\xi})/2$.
    \item
      $\KB^{M\times N}$ denotes the standard real vector space of $M\times N$ matrices with entries in $\KB$.
    \item
      Given a matrix
      $X=\brk[s]{\xi_{ij}}_{\substack{1\leq i\leq M\\ 1\leq j\leq N}}
      \in\KB^{M\times N}$, its conjugate transpose is
      $X^*
      =\brk[s]{\overline{\xi_{ji}}}_{\substack{1\leq j\leq N\\ 1\leq i\leq M}}
      \in\KB^{N\times M}$.
    \item
      The trace of a matrix $X\in\KB^{N\times N}$ is denoted by $\tra X$.
    \item
      $\HS^N(\KB)$ denotes the vector subspace of $\KB^{N\times N}$ which consists of Hermitian matrices, that is,
      \begin{equation}
        \HS^N(\KB)=\menge{X\in\KB^{N\times N}}{X=X^*}.
      \end{equation}
    \item
      $\US^N(\KB)=\menge{U\in\KB^{N\times N}}{U^*U=\Id}$ is the set
      of unitary matrices.
    \item
      $\SO^N=\menge{U\in\US^N(\RR)}{\det U=1}$ is the special orthogonal group.
    \item
      $\PS_{\pm}^N$ denotes the multiplicative group of all matrices
      in $\RR^{N\times N}$ with entries in $\set{-1,0,1}$ which
      contain exactly one nonzero entry in every row and every column.
      (The elements of $\PS_{\pm}^N$ are called
      signed permutation matrices.)
    \item
      $\PS^N$ denotes the subgroup of $\PS_{\pm}^N$ which consists of matrices
      with entries in $\set{0,1}$.
      (The elements of $\PS^N$ are called permutation matrices.)
    \item
      For every $x=(\xi_i)_{1\leq i\leq N}\in\RR^N$,
      $x^{\downarrow}$ denotes the rearrangement vector of $x$ with
      entries listed in decreasing order, and
      $\abs{x}^{\downarrow}=
      (\abs{\xi_i})_{1\leq i\leq N}^{\downarrow}$.
  \end{itemize}
\end{notation}

The first example shows that every real Hilbert space admits at
least a spectral decomposition system.

\begin{example}
\label{ex:1}
Let $\HH$ be a nonzero real Hilbert space, let the set
$\set{-1,1}$ act on $\RR$ via multiplication, denote by $S$
the unit sphere of $\HH$, and set
\begin{equation}
(\forall e\in S)\quad
\Lambda_e\colon\RR\to\HH\colon\xi\mapsto\xi e.
\end{equation}
Then $(\RR,\set{-1,1},\norm{\Cdot},(\Lambda_e)_{e\in S})$
is a spectral decomposition system for $\HH$.
In this setting, spectral functions are precisely
\emph{radial} functions, that is, functions
$\Phi\colon\HH\to\RXX$ that satisfy
\begin{equation}
(\forall x\in\HH)(\forall y\in\HH)\quad
\norm{x}=\norm{y}
\quad\Rightarrow\quad
\Phi(x)=\Phi(y).
\end{equation}
\end{example}
\begin{proof}
It is clear that the operators $(\Lambda_e)_{e\in S}$ are linear
isometries, and that property~\cref{d:2a} in \cref{d:2} is
satisfied with $\tau=\abs{\Cdot}$.
Next, upon choosing $e\in S$, we infer from
\begin{equation}
(\forall x\in\HH)\quad
x=
\begin{cases}
\Lambda_{x/\norm{x}}\brk!{\norm{x}}&\text{if}\,\,x\neq 0,\\
\Lambda_e\brk!{\norm{x}}&\text{if}\,\,x=0,
\end{cases}
\end{equation}
that property~\cref{d:2b} in \cref{d:2} is satisfied.
Finally, we conclude the proof by recognizing that,
in the current setting, property~\cref{d:2c} in \cref{d:2} is
precisely the Cauchy--Schwarz inequality.
\end{proof}

The framework of \cref{ex:1} can be extended to handle
the class of block-radial functions, which is frequently
encountered in multi-task learning and block-structured,
mixed-norm regularization
\cite{Argyriou08,Combettes19,Gramfort12,Yuan06};
see also \cref{ex:8bis-prox}.

\begin{example}
\label{ex:8bis}
Let $(\HS_i)_{i\in I}$ be a family of nonzero real Hilbert
spaces, denote by
\begin{equation}
\HH=\bigoplus_{i\in I}\HS_i
\end{equation}
their Hilbert direct sum (see \cite[Example~2.1]{Livre1}), define
\begin{equation}
\label{e:pnom}
\gamma\colon\HH\to\ell^2(I)\colon
(\mathsf{u}_i)_{i\in I}\mapsto
\brk!{\norm{\mathsf{u}_i}_{\HS_i}}_{i\in I},
\end{equation}
let the set $S=\set{-1,1}^I$ act on $\ell^2(I)$ via
\begin{equation}
\brk!{(\sigma_i)_{i\in I},(\xi_i)_{i\in I}}\mapsto
(\sigma_i\xi_i)_{i\in I},
\end{equation}
and set
\begin{equation}
\AD=\Menge3{(\mathsf{u}_i)_{i\in I}\in\prod_{i\in I}\HS_i}{
(\forall i\in I)\,\,\norm{\mathsf{u}_i}_{\HS_i}=1}
\end{equation}
and
\begin{equation}
\brk!{\forall\ad=(\mathsf{u}_i)_{i\in I}\in\AD}\quad
\Lambda_{\ad}\colon\ell^2(I)\to\HH\colon
(\xi_i)_{i\in I}\mapsto(\xi_i\mathsf{u}_i)_{i\in I}.
\end{equation}
Then $\mathfrak{S}=(\ell^2(I),S,\gamma,
(\Lambda_{\ad})_{\ad\in\AD})$
is a spectral decomposition system for $\HH$.
In this setting, spectral functions are precisely
\emph{block-radial} functions, that is, functions
$\Phi\colon\HH\to\RXX$ that satisfy
\begin{equation}
\brk!{\forall u=(\mathsf{u}_i)_{i\in I}\in\HH}
\brk!{\forall v=(\mathsf{v}_i)_{i\in I}\in\HH}\quad
\brk[s]!{\,
(\forall i\in I)\,\,
\norm{\mathsf{u}_i}_{\HS_i}
=\norm{\mathsf{v}_i}_{\HS_i}
\,}
\quad\Rightarrow\quad
\Phi(u)=\Phi(v).
\end{equation}
\end{example}

We now consider a framework based on the Fourier transform
that underpins several phase retrieval problems
\cite{Bauschke02,Luke02,Youla82}; see also \cref{ex:86}.

\begin{example}[Fourier transform]
\label{ex:8}
If $\KB$ is $\RR$ or $\CC$, we
denote by $L^2(\RR;\KB)$ the set of (equivalence classes of
almost-everywhere equal) square-integrable
measurable functions $f\colon(\RR,\EuScript{L})\to(\KB,\BE(\KB))$,
where $\EuScript{L}$ and $\BE(\KB)$ stand for the Lebesgue
$\sigma$-algebra on $\RR$ and the Borel $\sigma$-algebra of $\KB$,
respectively.
Let $\HH$ be the real Hilbert space
obtained by equipping $L^2(\RR;\CC)$ with pointwise addition,
real-scalar multiplication, and the scalar product
\begin{equation}
(\forall f\in\HH)(\forall g\in\HH)\quad
\scal{f}{g}=\Re\int_{\RR}f(t)\overline{g(t)}\,dt,
\end{equation}
and let $\XX$ be the standard real Hilbert space $L^2(\RR;\RR)$.
Further, set
\begin{equation}
\AD=\menge{\theta\colon\RR\to\RR}{\text{$\theta$ is
$\brk!{\EuScript{L},\BE(\RR)}$-measurable}},
\end{equation}
let $\SX=\menge{\sigma\in\AD}{\sigma\in\set{-1,1}\,\,\text{a.e.}}$
act on $\XX$ via pointwise multiplication, and define
\begin{equation}
\label{e:pmft}
\gamma\colon\HH\to\XX\colon
f\mapsto\abs!{\widehat{f}}
\quad\text{and}\quad
(\forall\theta\in\AD)\,\,
\Lambda_{\theta}\colon\XX\to\HH\colon
x\mapsto\mathfrak{F}^{-1}\brk!{\ee^{\ii\theta}x},
\end{equation}
where
\begin{equation}
\mathfrak{F}\colon L^2(\RR;\CC)\to L^2(\RR;\CC)\colon
f\mapsto\widehat{f}
\end{equation}
denotes the Plancherel extension of the Fourier transform.
Then
$\mathfrak{S}=(\XX,\SX,\gamma,(\Lambda_{\theta})_{\theta\in\AD})$
is a spectral decomposition system for $\HH$.
In this setting, spectral functions are precisely
\emph{Fourier-phase-invariant} functions, that is, functions
$\Phi\colon\HH\to\RXX$ that satisfy
\begin{equation}
(\forall f\in\HH)(\forall g\in\HH)\quad
\abs!{\widehat{f}}=\abs!{\widehat{g}}\,\,\text{a.e.}
\quad\Rightarrow\quad
\Phi(f)=\Phi(g).
\end{equation}
\end{example}
\begin{proof}
For every $\theta\in\AD$, since the Plancherel theorem
\cite[Theorem~8.29]{Folland99} yields
\begin{equation}
(\forall x\in\XX)\quad
\norm{\Lambda_{\theta}x}^2
=\int_{\RR}\abs!{\mathfrak{F}^{-1}\brk!{\ee^{\ii\theta}x}(t)}^2\,dt
=\int_{\RR}\abs!{\ee^{\ii\theta(\omega)}x(\omega)}^2\,d\omega
=\int_{\RR}\abs{x(\omega)}^2\,d\omega
=\norm{x}^2,
\end{equation}
it follows that $\Lambda_{\theta}$ is a linear isometry.
To verify property~\cref{d:2a} in \cref{d:2}, set
$\tau\colon\XX\to\XX\colon x\mapsto\abs{x}$.
For every $x\in\XX$ and every $\sigma\in\SX$,
since $\abs{\sigma}=1$ a.e., we deduce that
$\tau\brk{\sigma x}
=\abs{\sigma x}
=\abs{x}
=\tau(x)$ and, in turn, that $\tau$ is $\SX$-invariant.
In addition, for every $x\in\XX$, upon setting
\begin{equation}
\sigma\colon\RR\to\RR\colon
\omega\mapsto
\begin{cases}
\sign\brk!{x(\omega)}&\text{if}\,\,x(\omega)\neq 0,\\
1&\text{if}\,\,x(\omega)=0,
\end{cases}
\end{equation}
we get $\sigma\in\SX$ and $x=\sigma\tau(x)$. Moreover,
\begin{equation}
(\forall\theta\in\AD)(\forall x\in\XX)\quad
\gamma(\Lambda_{\theta}x)
=\abs!{\mathfrak{F}\brk!{\mathfrak{F}^{-1}\brk!{\ee^{\ii\theta}x}}}
=\abs!{\ee^{\ii\theta}x}
=\abs{x}
=\tau(x).
\end{equation}
Next, for every $f\in\HH$, upon choosing
$\theta\in\AD$ such that
$\mathfrak{F}f=\ee^{\ii\theta}\abs{\mathfrak{F}f}$,
we obtain
\begin{equation}
\Lambda_{\theta}\gamma(f)
=\mathfrak{F}^{-1}\brk!{\ee^{\ii\theta}\abs{\mathfrak{F}f}}
=\mathfrak{F}^{-1}(\mathfrak{F}f)
=f,
\end{equation}
and property~\cref{d:2b} in \cref{d:2} is thus verified.
Finally, since $\mathfrak{F}$ is an orthogonal transformation
\cite[Theorem~8.29]{Folland99}, we deduce from the triangle
inequality \cite[Proposition~2.22]{Folland99} that
\begin{equation}
(\forall f\in\HH)(\forall g\in\HH)\quad
\scal{f}{g}
=\Re\int_{\RR}\widehat{f}(\omega)
\overline{\widehat{g}(\omega)}\,d\omega
\leq\int_{\RR}\abs!{\widehat{f}(\omega)}\,
\abs!{\widehat{g}(\omega)}\,d\omega
=\scal{\gamma(f)}{\gamma(g)},
\end{equation}
which completes the proof.
\end{proof}

The next example revolves around the decreasing rearrangements of
functions, a central notion in the theory of
rearrangement-invariant norms and optimization problems with
prescribed rearrangements \cite{Alvino89,Bennett88,Burton89}.

\begin{example}
\label{ex:7}
In this example, $\EuScript{L}$ and $\lambda$ designate the
Lebesgue $\sigma$-algebra and the Lebesgue measure on $\RR$,
respectively, and $\EuScript{L}_I$ denotes the restriction of
$\EuScript{L}$ on a set $I\in\EuScript{L}$.
Given a finite measure space $(\Omega,\FF,\mu)$
and using the notation $I=\intv{0}{\mu(\Omega)}$,
we denote by $\AD(\Omega,\mu)$ the set of all pairs
$(\sigma,T)$ in which
$\sigma\colon(\Omega,\FF)\to\RR$ is a measurable function such
that $\sigma\in\set{-1,1}$ $\mu$-a.e.,
and $T\colon(\Omega,\FF)\to(I,\EuScript{L}_I)$
is a \emph{measure-preserving} mapping,
that is, $T$ is a measurable mapping and satisfies
\begin{equation}
\brk!{\forall A\in\EuScript{L}_I}\quad
\mu\brk!{T^{-1}(A)}=\lambda(A);
\end{equation}
further, for every measurable function
$f\colon(\Omega,\FF)\to\RXX$,
its \emph{decreasing rearrangement} is
the measurable function
\begin{equation}
\abs{f}^{\downarrow}\colon(I,\EuScript{L}_I)\to\RPX\colon
t\mapsto\inf\menge{\alpha\in\RP}{
\mu_f(\alpha)\leq t},
\end{equation}
where
\begin{equation}
\mu_f\colon\RP\to I\colon\alpha\mapsto
\mu\brk2{\menge{\omega\in\Omega}{\abs{f(\omega)}>\alpha}}
\end{equation}
is the \emph{$\mu$-distribution} of $f$.
Now let $(\Omega,\FF,\mu)$ be a complete, finite, and nonatomic
measure space, set
$I=\intv{0}{\mu(\Omega)}$,
let
\begin{equation}
\HH=L^2\brk!{\Omega,\FF,\mu;\RXX}
\quad\text{and}\quad
\XX=L^2\brk!{I,\EuScript{L}_I,\lambda;\RXX},
\end{equation}
and define
\begin{equation}
\gamma\colon\HH\to\XX\colon f\mapsto\abs{f}^{\downarrow}.
\end{equation}
Moreover, let $\AD(I,\lambda)$ act on $\XX$ via
\begin{equation}
\label{e:yex7}
\brk!{(\kappa,Q),x}\mapsto\kappa(x\circ Q)
\end{equation}
and define
\begin{equation}
\label{e:kqlq}
\brk!{\forall(\sigma,T)\in\AD(\Omega,\mu)}\quad
\Lambda_{(\sigma,T)}\colon\XX\to\HH\colon
x\mapsto\sigma(x\circ T).
\end{equation}
Then $\mathfrak{S}=(\XX,\AD(I,\lambda),\gamma,
(\Lambda_{(\sigma,T)})_{(\sigma,T)\in\AD(\Omega,\mu)})$
is a spectral decomposition system for $\HH$.
In this setting, spectral functions are precisely
\emph{rearrangement-invariant} functions, that is,
functions $\Phi\colon\HH\to\RXX$ that satisfy
\begin{equation}
(\forall f\in\HH)(\forall g\in\HH)\quad
\mu_f=\mu_g
\quad\Rightarrow\quad
\Phi(f)=\Phi(g).
\end{equation}
(Two functions $f$ and $g$ in $\HH$ which have the same
distribution function are called \emph{equimeasurable}.)
\end{example}
\begin{proof}
First, since $(\Omega,\FF,\mu)$ is complete, finite, and nonatomic,
there exists a measure-preserving mapping from
$(\Omega,\FF)$ onto $(I,\EuScript{L}_I)$
\cite[Chapter~2, Proposition~7.4]{Bennett88},
which ensures that
$\AD(\Omega,\mu)\neq\emp$.
Second, for every measure-preserving mapping
$T\colon(\Omega,\FF)\to(I,\EuScript{L}_I)$
and every $(x,y)\in\XX\times\XX$, we derive from
\cite[Chapter~2, Propositions~1.8 and 7.2]{Bennett88} that
\begin{equation}
\label{e:1uby}
\int_{\Omega}\abs!{x\brk!{T(\omega)}}^2\,\mu(d\omega)
=2\int_0^{\pinf}\alpha\mu_{x\circ T}(\alpha)\,d\alpha
=2\int_0^{\pinf}\alpha\lambda_x(\alpha)\,d\alpha
=\int_I\abs{x(t)}^2\,dt
<\pinf,
\end{equation}
and verify at once that
\begin{equation}
x=y\,\,\text{$\lambda$-a.e.}
\quad\Rightarrow\quad
x\circ T=y\circ T\,\,\text{$\mu$-a.e.}
\end{equation}
and that
\begin{equation}
\label{e:5njw}
\abs{x\circ T}^{\downarrow}
=\abs{x}^{\downarrow}.
\end{equation}
Therefore, the action of $\AD(I,\lambda)$ on $\XX$,
as well as the operators
$(\Lambda_{(\sigma,T)})_{(\sigma,T)\in\AD(\Omega,\mu)}$,
are well defined;
in addition, it results from \cref{e:1uby} that
$\AD(I,\lambda)$ acts on $\XX$ by linear isometries,
and that the operators
$(\Lambda_{(\sigma,T)})_{(\sigma,T)\in\AD(\Omega,\mu)}$
are linear isometries.
Third, since \cite[Chapter~2, Proposition~1.8]{Bennett88} yields
\begin{equation}
(\forall f\in\HH)\quad
\int_I\brk!{\abs{f}^{\downarrow}(t)}^2\,dt
=\int_{\Omega}\abs{f(\omega)}^2\,\mu(d\omega)
<\pinf,
\end{equation}
we infer that $\gamma$ is well defined.
Next, to verify property~\cref{d:2a} in \cref{d:2},
define
\begin{equation}
\tau\colon\XX\to\XX\colon x\mapsto\abs{x}^{\downarrow}.
\end{equation}
It results from \cref{e:yex7,e:5njw} that
$\tau$ is $\AD(I,\lambda)$-invariant and that
\begin{equation}
\brk!{\forall(\sigma,T)\in\AD(\Omega,\mu)}(\forall x\in\XX)\quad
\gamma\brk!{\Lambda_{(\sigma,T)}x}
=\abs!{\sigma(x\circ T)}^{\downarrow}
=\abs{x\circ T}^{\downarrow}
=\tau(x),
\end{equation}
while Ryff's theorem \cite[Chapter~2, Theorem~7.5]{Bennett88}
asserts that, for every $x\in\XX$,
there exists $(\kappa,Q)\in\AD(I,\lambda)$ such that
$x=\kappa\abs{x}=\kappa(\abs{x}^{\downarrow}\circ Q)$
$\lambda$-a.e., which leads to
$x=(\kappa,Q)\cdot\tau(x)$.
An analogous argument using Ryff's theorem shows
that property~\cref{d:2b} in \cref{d:2} is satisfied.
Finally, property~\cref{d:2c} in \cref{d:2} follows from
the Hardy--Littlewood rearrangement inequality
\cite[Chapter~2, Theorem~2.2]{Bennett88}.
\end{proof}

The normal decomposition system framework of \cite{Lewis96b}
is the focus of the next example. This subsumes, in
particular, the semisimple real Lie algebra setting of
\cite{Lewis00,Tam98,Vincent97}.

\begin{example}[normal decomposition system]
  \label{ex:2}
  Let $(\HH,\mathscr{G},\gamma)$ be a \emph{normal decomposition
  system} in the sense of \cite[Definition~2.1]{Lewis96b}, that is,
  $\HH$ is a Euclidean space,
  $\mathscr{G}$ is a subgroup of the group of all orthogonal
  linear operators on $\HH$ (equipped with mapping composition),
  and $\gamma\colon\HH\to\HH$ is a $\mathscr{G}$-invariant mapping
  such that
  \begin{equation}
    \begin{cases}
      (\forall x\in\HH)(\exi T\in\mathscr{G})\,\,
      x=T\gamma(x)\\
      (\forall x\in\HH)(\forall y\in\HH)\,\,
      \scal{x}{y}\leq\scal{\gamma(x)}{\gamma(y)}.
    \end{cases}
  \end{equation}
  Additionally, let $\XX$ be a vector subspace of $\HH$ which
  contains the range of $\gamma$, set
  \begin{equation}
    \mathscr{G}_{\XX}=\menge{T\in\mathscr{G}}{T(\XX)=\XX},
  \end{equation}
  and let $\mathscr{G}_{\XX}$ act on $\XX$ via
  $(T,x)\mapsto Tx$.
  Suppose that
  \begin{equation}
    (\forall x\in\XX)(\exi T\in\mathscr{G}_{\XX})\quad
    x=T\gamma(x).
  \end{equation}
  Then $\mathfrak{S}=(\XX,\mathscr{G}_{\XX},\gamma,
  (\restr{T}{\XX})_{T\in\mathscr{G}})$
  is a spectral decomposition system for $\HH$.
  In this setting, spectral functions are precisely
  $\mathscr{G}$-invariant functions $\Phi\colon\HH\to\RXX$.
\end{example}
\begin{proof}
  In fact, property~\cref{d:2a} in \cref{d:2} is satisfied with
  $\tau=\restr{\gamma}{\XX}$.
\end{proof}

The following example concerns the Euclidean Jordan algebra
framework of \cite{Baes07,Lourenco20,Sendov07,Sun08},
which encompasses in particular the space $\HS^N(\KB)$ of Hermitian
matrices (see \cref{ex:4}).
As shown in \cite{Orlitzky22}, in general Euclidean Jordan algebras
cannot be embedded into a normal decomposition system of
\cref{ex:2}.

\begin{example}[Euclidean Jordan algebra]
  \label{ex:3}
  Let $\FH$ be a \emph{Euclidean Jordan algebra} (also known as \emph{formally
  real Jordan algebra}), that is, $\FH$ is a finite-dimensional real vector
  space which is endowed with a bilinear form
  \begin{equation}
    \FH\times\FH\to\FH\colon
    (X,Y)\mapsto X\circledast Y
  \end{equation}
  such that the following are satisfied:
  \begin{enumerate}[label={\normalfont[\Alph*]}]
    \item
      $(\forall X\in\FH)(\forall Y\in\FH)$
      $X\circledast Y=Y\circledast X$ and
      $X\circledast((X\circledast X)\circledast Y)
      =(X\circledast X)\circledast(X\circledast Y)$.
    \item
      There exists a scalar product
      $\killing{\Cdot}{\Cdot}$ on $\FH$ such that
      \begin{equation}
        (\forall X\in\FH)(\forall Y\in\FH)(\forall Z\in\FH)\quad
        \killing{X\circledast Y}{Z}=\killing{X}{Y\circledast Z}.
      \end{equation}
  \end{enumerate}
  (We refer to \cite{Faraut94} for background on and concrete examples of
  Euclidean Jordan algebras.)
  We equip $\FH$ with the trace scalar product
  \begin{equation}
    \label{e:wcyk}
    (\forall X\in\FH)(\forall Y\in\FH)\quad
    \scal{X}{Y}=\Tra(X\circledast Y).
  \end{equation}
  In addition, we denote by $E$ the identity element of $\FH$ and by $N$ the
  rank of $\FH$. Additionally, let $\PS^N$ act on $\RR^N$ via matrix-vector
  multiplication. Following \cite{Baes07}, we define a \emph{Jordan frame} of $\FH$ as a family
  $(A_i)_{1\leq i\leq N}$ in $\FH^N\smallsetminus\set{0}$ such that
  \begin{equation}
    \label{e:y7by}
    \begin{cases}
      \brk!{\forall i\in\set{1,\ldots,N}}
      \brk!{\forall j\in\set{1,\ldots,N}}\,\,
      A_i\circledast A_j=
      \begin{cases}
        A_i&\text{if}\,\,i=j,\\
        0&\text{if}\,\,i\neq j,
      \end{cases}\\
      \Sum_{i=1}^NA_i=E.
    \end{cases}
  \end{equation}
  (This is equivalent to the standard definition, e.g., on
  \cite[p.~44]{Faraut94}, of a complete system of orthogonal
  primitive idempotents satisfying \eqref{e:y7by} since such a
  system is necessarily of size $N$ and vice versa.)
  According to the spectral decomposition theorem for Euclidean Jordan algebras
  \cite[Theorem~III.1.2]{Faraut94}, for every $X\in\FH$, there exist a
  unique vector $(\lambda_1(X),\ldots,\lambda_N(X))\in\RR^N$, the entries of
  which are called the \emph{eigenvalues} of $X$, 
  and a Jordan frame $(A_i)_{1\leq i\leq N}$ such that
  \begin{equation}
    \lambda_1(X)\geq\cdots\geq\lambda_N(X)
    \quad\text{and}\quad
    X=\sum_{i=1}^N\lambda_i(X)A_i;
  \end{equation}
  this decomposition process thus defines a mapping
  \begin{equation}
    \label{e:9vmn}
    \lambda\colon\FH\to\RR^N\colon
    X\mapsto\brk!{\lambda_1(X),\ldots,\lambda_N(X)}.
  \end{equation}
  Further, let $\AD$ be the set of Jordan frames of $\FH$ and define
  \begin{equation}
    \label{e:cwpw}
    \brk!{\forall\ad=(A_i)_{1\leq i\leq N}\in\AD}\quad
    \Lambda_{\ad}\colon\RR^N\to\FH\colon
    x=(\xi_i)_{1\leq i\leq N}\mapsto\sum_{i=1}^N\xi_iA_i.
  \end{equation}
  Then $\mathfrak{S}=(\RR^N,\PS^N,\lambda,(\Lambda_{\ad})_{\ad\in\AD})$
  is a spectral decomposition system for $\FH$.
  In this setting, the class of spectral functions coincides with
  that studied in \cite{Baes07,Lourenco20,Sendov07,Sun08}.
\end{example}
\begin{proof}
  If $A$ is an element of a Jordan frame of $\FH$, then
  it follows from the spectral decomposition theorem that
  $\lambda(A)=(1,0,\ldots,0)$ and, in turn, from \cref{e:wcyk},
  \cref{e:y7by}, and \cite[Theorem~III.1.2]{Faraut94} that
  \begin{equation}
    \norm{A}^2
    =\Tra(A\circledast A)
    =\Tra A
    =\sum_{i=1}^N\lambda_i(A)
    =1.
  \end{equation}
  Therefore, for every $\ad=(A_i)_{1\leq i\leq N}\in\AD$, inasmuch as
  $(A_i)_{1\leq i\leq N}$ is an orthonormal family thanks to \cref{e:y7by}
  and \cref{e:wcyk}, we derive from \cref{e:cwpw} that
  \begin{equation}
    \brk!{\forall x=(\xi_i)_{1\leq i\leq N}\in\RR^N}\quad
    \norm{\Lambda_{\ad}x}^2
    =\sum_{i=1}^N\xi_i^2
    =\norm{x}_2^2.
  \end{equation}
  This implies that $\Lambda_{\ad}$ is a linear isometry. Next, define
  $\tau\colon\RR^N\to\RR^N\colon
  x\mapsto x^{\downarrow}$.
  It is clear that $\tau$ is $\PS^N$-invariant and
  $(\forall x\in\RR^N)(\exi P\in\PS^N)$
  $x=Px^{\downarrow}=P\tau(x)$.
  At the same time, we get from the spectral decomposition
  theorem that
  \begin{equation}
    \brk!{\forall\ad=(A_i)_{1\leq i\leq N}\in\AD}
    \brk!{\forall x=(\xi_i)_{1\leq i\leq N}\in\RR^N}\quad
    \lambda\brk{\Lambda_{\ad}x}
    =\lambda\brk3{\sum_{i=1}^N\xi_iA_i}
    =x^{\downarrow}
    =\tau(x)
  \end{equation}
  and that
  \begin{equation}
    (\forall X\in\FH)
    \brk!{\exi\ad=(A_i)_{1\leq i\leq N}\in\AD}\quad
    X
    =\sum_{i=1}^N\lambda_i(X)A_i
    =\Lambda_{\ad}\lambda(X),
  \end{equation}
  where the last equality follows from \cref{e:9vmn,e:cwpw}.
  Furthermore, we deduce from \cref{e:wcyk} and \cite[Theorem~23]{Baes07} that
  \begin{equation}
    (\forall X\in\FH)
    (\forall Y\in\FH)\quad
    \scal{X}{Y}
    \leq\sum_{i=1}^N\lambda_i(X)\lambda_i(Y)
    =\scal{\lambda(X)}{\lambda(Y)}.
  \end{equation}
  Altogether, $\mathfrak{S}$ is a spectral decomposition system for
  $\FH$. 
\end{proof}

In \cref{ex:3}, taking $\FH$ to be the Euclidean Jordan algebra of
Hermitian matrices (see \cite[Section~V.2]{Faraut94}), we obtain
the following framework.

\begin{example}[eigenvalue decomposition]
  \label{ex:4}
  Let $2\leq N\in\NN$. We equip $\HS^N(\KB)$ with the trace scalar
  product
  \begin{equation}
    \scal{\Cdot}{\Cdot}\colon
    (X,Y)\mapsto\Re\tra(XY)
  \end{equation}
  and let $\PS^N$ act on $\RR^N$ via matrix-vector multiplication.
  For every $X\in\HS^N(\KB)$, we denote by
  $\lambda(X)=(\lambda_1(X),\ldots,\lambda_N(X))$ the vector
  consisting of the $N$ eigenvalues (counting multiplicities) of
  $X$ listed in decreasing order;
  see \cite[Theorem~5.3.6(c)]{Rodman14} for the quaternion case.
  Additionally, set
  \begin{equation}
    \brk!{\forall U\in\US^N(\KB)}\quad
    \Lambda_U\colon\RR^N\to\HS^N(\KB)\colon
    x\mapsto U(\Diag x)U^*.
  \end{equation}
  Then
  $\mathfrak{S}=(\RR^N,\PS^N,\lambda,(\Lambda_U)_{U\in\US^N(\KB)})$
  is a spectral decomposition system for $\HS^N(\KB)$.
  In this setting, spectral functions are precisely
  \emph{weakly unitarily invariant} functions,
  that is, functions $\Phi\colon\HS^N(\KB)\to\RXX$ that satisfy
  \begin{equation}
    \brk!{\forall X\in\HS^N(\KB)}
    \brk!{\forall U\in\US^N(\KB)}\quad
    \Phi(UXU^*)=\Phi(X).
  \end{equation}
\end{example}

We now turn to a setting which has been extensively used in robust
principal component analysis and signal processing,
including applications involving quaternion matrices; see, for
instance, \cite{Candes10,Chan16,Chen19}.

\begin{example}[singular value decomposition]
  \label{ex:5}
  Let $M$ and $N$ be strictly positive integers
  and set $m=\min\set{M,N}$.
  Let $\FH$ be the Euclidean space obtained by equipping
  $\KB^{M\times N}$ with the trace scalar product
  \begin{equation}
    \label{e:gx9e}
    (X,Y)\mapsto\Re\tra(X^*Y),
  \end{equation}
  and let $\PS_{\pm}^m$ act on $\RR^m$ via matrix-vector multiplication.
  Given a matrix $X\in\FH$, the vector in
  $\RP^m$ formed by the $m$ singular values (counting
  multiplicities) of $X$,
  with the convention that they are listed in decreasing order, is denoted by
  $(\sigma_1(X),\ldots,\sigma_m(X))$; see
  \cite[Proposition~3.2.5\,(f)]{Rodman14} for
  singular value decomposition of matrices in $\HB^{M\times N}$. This thus
  defines a mapping
  \begin{equation}
    \sigma\colon\FH\to\RR^m\colon
    X\mapsto\brk!{\sigma_1(X),\ldots,\sigma_m(X)}.
  \end{equation}
  Further, set $\AD=\US^M(\KB)\times\US^N(\KB)$ and
  \begin{equation}
    \label{e:4l5f}
    \brk!{\forall\ad=(U,V)\in\AD}\quad
    \Lambda_{\ad}\colon\RR^m\to\FH\colon
    x\mapsto U(\Diag x)V^*,
  \end{equation}
  where the operator $\Diag\colon\RR^m\to\FH$ maps a vector
  $(\xi_i)_{1\leq i\leq m}$ to the diagonal matrix in $\FH$ of which the
  diagonal entries are $\xi_1,\ldots,\xi_m$. Then
  $\mathfrak{S}=(\RR^m,\PS_{\pm}^m,\sigma,(\Lambda_{\ad})_{\ad\in\AD})$
  is a spectral decomposition system for $\FH$.
  In this setting, spectral functions
  are precisely \emph{unitarily invariant} functions, that is,
  functions $\Phi\colon\FH\to\RXX$ that satisfy
  \begin{equation}
    (\forall X\in\FH)\brk!{\forall U\in\US^M(\KB)}
    \brk!{\forall V\in\US^N(\KB)}\quad
    \Phi(UXV)=\Phi(X).
  \end{equation}
\end{example}
\begin{proof}
  We derive from \cref{e:gx9e} and basic properties of the trace
  function (see, e.g.,
  \cite[properties~(c) and (d), p.~30]{Rodman14}
  and \cite[Proposition~V.2.1\,(i)]{Faraut94} for the case
  $\KB=\HB$) that the operators $(\Lambda_{\ad})_{\ad\in\AD}$ are
  linear isometries. To verify property~\cref{d:2a} in \cref{d:2},
  set $\tau\colon\RR^m\to\RR^m\colon x\mapsto\abs{x}^{\downarrow}$.
  It is evident that $\tau$ is $\PS_{\pm}^m$-invariant and
  $(\forall x\in\RR^m)(\exi P\in\PS_{\pm}^m)$
  $x=P\tau(x)$. In addition, it follows from \cref{e:4l5f} and the
  uniqueness of singular values (see
  \cite[Proposition~3.2.5\,(f)]{Rodman14} for the quaternion case)
  that
  \begin{align}
    \brk!{\forall\ad=(U,V)\in\AD}(\forall x\in\RR^m)\quad
    \sigma(\Lambda_{\ad}x)
    =\sigma\brk{\Diag x}
    =\abs{x}^{\downarrow}
    =\tau(x).
  \end{align}
  Finally, observe that, in the present setting, property~\cref{d:2b} in
  \cref{d:2} is precisely the singular value decomposition of matrices in
  $\FH$, and property~\cref{d:2c} in \cref{d:2} is precisely the von~Neumann
  trace inequality; see \cite[Theorem~7.4.1.1]{Horn13} for the real and
  complex cases, and \cite[Lemma~3]{Chan16} for the quaternion case.
\end{proof}

Our last example is a setting that arises in the study of isotropic
stored energy functions in nonlinear elasticity
\cite{Dacorogna07,Rosakis98}.

\begin{example}[signed singular value decomposition]
  \label{ex:6}
  Let $2\leq N\in\NN$ and let $\FH$ be the Euclidean space obtained by
  equipping $\RR^{N\times N}$ with the trace scalar product
  \begin{equation}
    (X,Y)\mapsto\tra(X\tran Y),
  \end{equation}
  let $\SD$ be the subgroup of $\PS_{\pm}^N$ which consists of all matrices
  with an even number of entries equal to $-1$, and let $\SD$ act on $\RR^N$
  via matrix-vector multiplication. As in \cref{ex:5},
  $\sigma(X)=(\sigma_1(X),\ldots,\sigma_N(X))$ designates the vector of the
  $N$ singular values of a matrix $X\in\FH$, with the convention that
  $\sigma_1(X)\geq\cdots\geq\sigma_N(X)$. Define a mapping
  \begin{equation}
    \gamma\colon\FH\to\RR^N\colon
    X\mapsto\brk!{\gamma_1(X),\ldots,\gamma_N(X)}
  \end{equation}
  by
  \begin{equation}
    (\forall X\in\FH)
    \brk!{\forall i\in\set{1,\ldots,N}}\quad
    \gamma_i(X)=
    \begin{cases}
      \sigma_i(X)&\text{if}\,\,1\leq i\leq N-1,\\
      \sigma_N(X)\sign(\det X)&\text{if}\,\,i=N.
    \end{cases}
  \end{equation}
  Finally, set $\AD=\SO^N\times\SO^N$ and
  \begin{equation}
    \brk!{\forall\ad=(U,V)\in\AD}\quad
    \Lambda_{\ad}\colon\RR^N\to\FH\colon
    x\mapsto U(\Diag x)V\tran.
  \end{equation}
  Then $\mathfrak{S}=(\RR^N,\SD,\gamma,(\Lambda_{\ad})_{\ad\in\AD})$
  is a spectral decomposition system for $\FH$.
  In this setting, spectral functions are precisely
  \emph{$\SO^N\times\SO^N$-invariant} functions, that is, functions
  $\Phi\colon\FH\to\RXX$ that satisfy
  \begin{equation}
    (\forall X\in\FH)
    \brk!{\forall U\in\SO^N}
    \brk!{\forall V\in\SO^N}\quad
    \Phi(UXV)=\Phi(X).
  \end{equation}
\end{example}
\begin{proof}
  It is clear that the operators $(\Lambda_{\ad})_{\ad\in\AD}$ are linear
  isometries. Next, to verify property~\cref{d:2a} in \cref{d:2}, we define a
  mapping $\tau\colon\RR^N\to\RR^N$ as follows:
  for every $x=(\xi_i)_{1\leq i\leq N}\in\RR^N$,
  let $(\mu_i)_{1\leq i\leq N}=\abs{x}^{\downarrow}$ and set
  $\tau(x)=(\nu_i)_{1\leq i\leq N}$, where
  \begin{equation}
    \brk!{\forall i\in\set{1,\ldots,N}}\quad
    \nu_i=
    \begin{cases}
      \mu_i&\text{if}\,\,1\leq i\leq N-1,\\
      \mu_N\sign(\xi_1\cdots\xi_N)&\text{if}\,\,i=N.
    \end{cases}
  \end{equation}
  One can verify that $\tau$ is $\SD$-invariant and
  $(\forall x\in\RR^N)(\exi P\in\SD)$ $x=P\tau(x)$.
  The condition
  \begin{equation}
    (\forall\ad\in\AD)\quad\gamma\circ\Lambda_{\ad}=\tau
  \end{equation}
  in property~\cref{d:2a} in \cref{d:2} follows from the orthogonal invariance
  of $\sigma$ and the construction of $\gamma$.
  Moreover, in the present setting, property~\cref{d:2b} in \cref{d:2} reads
  \begin{equation}
    \brk!{\forall X\in\RR^{N\times N}}
    (\exi U\in\SO^N)(\exi V\in\SO^N)\quad
    X=U\brk!{\Diag\gamma(X)}V\tran,
  \end{equation}
  which is an easy consequence of the singular value decomposition theorem.
  Finally, property~\cref{d:2c} in \cref{d:2} follows from
  \cite[Lemma~1.3]{Rosakis98}.
\end{proof}

\section{Properties of spectral and spectral-induced ordering mappings}
\label{sec:3}

In this section, we present properties of spectral and
spectral-induced ordering mappings that will be frequently employed
in our analysis.
More importantly, we provide in \cref{p:3} a necessary and
sufficient condition -- formulated in terms of a ``simultaneous
spectral decomposition'' -- for equality in property \cref{d:2c} in
\cref{d:2}, which will serve as an essential tool
in \cref{sec:4,sec:5} for explicit constructions
of several convex analytical objects associated with spectral
functions via their associated invariant
functions and the spectral decomposition property \cref{d:2b} in
\cref{d:2}.

Throughout this paper, we shall operate under the
umbrella of the following setting.

\begin{assumption}
\label{a:1}
$\FH$ is a real Hilbert space,
$\mathfrak{S}=(\XX,\SD,\gamma,(\Lambda_{\ad})_{\ad\in\AD})$
is a spectral decomposition system for $\FH$,
and $\tau\colon\XX\to\XX$ is the spectral-induced ordering
mapping of the system $\mathfrak{S}$, that is, $\tau$ is
$\SD$-invariant and satisfies
\begin{equation}
\label{e:1ae1}
(\forall x\in\XX)(\exi\sd\in\SD)\quad
x=\sd\cdot\tau(x)
\end{equation}
and
\begin{equation}
\label{e:1ae2}
(\forall\ad\in\AD)\quad\gamma\circ\Lambda_{\ad}=\tau.
\end{equation}
\end{assumption}

The following standard fact in functional analysis will be
repeatedly used, and we include a proof for completeness.

\begin{lemma}
\label{l:1-}
Let $\HH$ and $\GG$ be real Hilbert spaces
and suppose that $L\colon\HH\to\GG$ is a linear isometry.
Then the following hold:
\begin{enumerate}
\item
\label{l:1-i}
$L^*\circ L=\Id_{\HH}$.
\item
\label{l:1-ii}
$(\forall x\in\HH)(\forall y\in\HH)$
$\scal{x}{y}=\scal{Lx}{Ly}$.
\end{enumerate}
\end{lemma}
\begin{proof}
Let $x$ and $y$ be in $\HH$.

\cref{l:1-i}:
Since
$\norm{x}^2-2\scal{x}{y}+\norm{y}^2
=\norm{x-y}^2
=\norm{Lx-Ly}^2
=\norm{Lx}^2-2\scal{Lx}{Ly}+\norm{Ly}^2
=\norm{x}^2-2\scal{L^*(Lx)}{y}+\norm{y}^2$,
we get $\scal{L^*(Lx)}{y}=\scal{x}{y}$.
Hence, since $y$ is arbitrary, it follows that $L^*(Lx)=x$.

\cref{l:1-ii}:
This follows from \cref{l:1-i}.
\end{proof}

We now establish several basic properties of spectral-induced
ordering mappings.

\begin{proposition}
\label{p:1}
Suppose that \cref{a:1} is in force. Then the following hold:
\begin{enumerate}
\item
\label{p:1i}
$\tau$ is idempotent, that is, $\tau\circ\tau=\tau$.
\item
\label{p:1ii}
$(\forall x\in\XX)(\forall\ad\in\AD)$
$\norm{\Lambda_{\ad}x}=\norm{x}=\norm{\tau(x)}$.
\item
\label{p:1iii}
$(\forall x\in\XX)(\forall y\in\XX)$
$\scal{x}{y}\leq\scal{\tau(x)}{\tau(y)}$.
\item
\label{p:1iv}
$\tau$ is nonexpansive, that is,
$(\forall x\in\XX)(\forall y\in\XX)$
$\norm{\tau(x)-\tau(y)}\leq\norm{x-y}$.
\item
\label{p:1v}
$\ran\tau$ is a closed convex cone in $\XX$.
\end{enumerate}
\end{proposition}
\begin{proof}
\cref{p:1i}:
A consequence of \cref{e:1ae1} and the $\SD$-invariance of $\tau$.

\cref{p:1ii}:
This follows from \cref{e:1ae1} and the assumption that the
operators $(\Lambda_{\ad})_{\ad\in\AD}$ are linear isometries and
that $\SD$ acts on $\XX$ by linear isometries.

\cref{p:1iii}:
Take $\ad\in\AD$. We derive from
property~\cref{d:2c} in \cref{d:2} and \cref{e:1ae2} that
$(\forall x\in\XX)(\forall y\in\XX)$
$\scal{x}{y}
=\scal{\Lambda_{\ad}x}{\Lambda_{\ad}y}
\leq\scal{\gamma(\Lambda_{\ad}x)}{\gamma(\Lambda_{\ad}y)}
=\scal{\tau(x)}{\tau(y)}$.

\cref{p:1iv}:
Combine \cref{p:1iii,p:1ii}.

\cref{p:1v}:
We employ the technique of \cite[Theorem~2.4]{Lewis96b},
which establishes this result in the setting of \cref{ex:2}.
By \cref{p:1i},
$(\forall y\in\ran\tau)$
$y=\tau(y)$.
Thus, for every $x\in\XX$, it follows from \cref{p:1iii} that
\begin{equation}
\sup_{y\in\ran\tau}\scal{y}{x-\tau(x)}
=\sup_{y\in\ran\tau}\brk!{\scal{y}{x}-\scal{\tau(y)}{\tau(x)}}
\leq 0
\end{equation}
and, therefore, that
$x-\tau(x)\in(\ran\tau)^{\ominus}$,
where $K^{\ominus}=\menge{u\in\XX}{\sup\scal{K}{u}\leq 0}$
denotes the polar cone of a subset $K$ of $\XX$.
In turn, for every $x\in(\ran\tau)^{\ominus\ominus}$,
we derive from \cref{p:1ii} that
\begin{equation}
0\geq
2\scal{x}{x-\tau(x)}
=\norm{x}^2+\norm{x-\tau(x)}^2-\norm{\tau(x)}^2
=\norm{x-\tau(x)}^2,
\end{equation}
which implies that $x=\tau(x)\in\ran\tau$.
Hence $(\ran\tau)^{\ominus\ominus}\subset\ran\tau$.
Thus, because $(\ran\tau)^{\ominus\ominus}$ is the smallest
closed convex cone in $\XX$ that contains
$\ran\tau$ \cite[Propositions~6.24\,(iii) and 6.33]{Livre1},
we must have $(\ran\tau)^{\ominus\ominus}=\ran\tau$.
Consequently, $\ran\tau$ is a closed convex cone.
\end{proof}

\cref{p:1}\,\cref{p:1iii} facilitates the construction
of another spectral decomposition system.

\begin{example}
\label{ex:9}
Suppose that \cref{a:1} is in force and define
\begin{equation}
(\forall\sd\in\SD)\quad
\pi_{\sd}\colon\XX\to\XX\colon x\mapsto\sd\cdot x.
\end{equation}
Then $(\XX,\SD,\tau,(\pi_{\sd})_{\sd\in\SD})$ is a spectral
decomposition system for $\XX$.
\end{example}

Analogous to \cref{p:1}, the following result provides basic
properties of spectral mappings.

\begin{proposition}
  \label{p:2}
  Suppose that \cref{a:1} is in force. Then the following hold:
  \begin{enumerate}
    \item
      \label{p:2i}
      $\tau\circ\gamma=\gamma$ and $(\forall\ad\in\AD)$
      $\gamma\circ\Lambda_{\ad}\circ\gamma=\gamma$.
    \item
      \label{p:2ii}
      $\ran\gamma=\ran\tau$ is a closed convex cone in $\XX$.
    \item
      \label{p:2iii}
      $(\forall X\in\FH)(\forall\ad\in\AD)$
      $\norm{X}
      =\norm{\gamma(X)}
      =\norm{\Lambda_{\ad}\gamma(X)}$.
    \item
      \label{p:2iv}
      $\gamma$ is nonexpansive, that is,
      $(\forall X\in\FH)(\forall Y\in\FH)$
      $\norm{\gamma(X)-\gamma(Y)}\leq\norm{X-Y}$.
    \item
      \label{p:2v}
      $(\forall X\in\FH)(\forall\alpha\in\RP)$
      $\gamma(\alpha X)=\alpha\gamma(X)$.
  \end{enumerate}
\end{proposition}
\begin{proof}
  \cref{p:2i}:
  In view of \cref{e:1ae2}, it is enough to show that
  $\tau\circ\gamma=\gamma$. Take $X\in\FH$ and $\ad\in\AD_X$.
  By \cref{e:1ae2}, $\tau=\gamma\circ\Lambda_{\ad}$. However, by
  the very definition of $\AD_X$, $X=\Lambda_{\ad}\gamma(X)$. Hence
  $\tau\brk{\gamma(X)}
  =(\gamma\circ\Lambda_{\ad})\brk{\gamma(X)}
  =\gamma\brk{\Lambda_{\ad}\gamma(X)}
  =\gamma(X)$.

  \cref{p:2ii}:
  It follows from \cref{p:2i} that $\ran\gamma\subset\ran\tau$, while
  \cref{e:1ae2} yields $\ran\tau\subset\ran\gamma$.
  Thus $\ran\gamma=\ran\tau$, which is a closed convex cone in $\XX$
  thanks to \cref{p:1}\,\cref{p:1v}.

  \cref{p:2iii}:
  This follows from the assumption that the operators
  $(\Lambda_{\ad})_{\ad\in\AD}$ are linear isometries,
  and from the spectral decomposition of elements in $\FH$;
  see property~\cref{d:2b} in \cref{d:2}.

  \cref{p:2iv}:
  Combine property~\cref{d:2c} in \cref{d:2} and \cref{p:2iii}.

  \cref{p:2v}:
  For every $X\in\FH$ and every $\alpha\in\RP$, we derive from
  \cref{p:2iii} and property~\cref{d:2c} in \cref{d:2} that
  \begin{align}
    \norm{\gamma(\alpha X)-\alpha\gamma(X)}^2
    &=\norm{\gamma(\alpha X)}^2-2\alpha\scal{\gamma(\alpha X)}{
      \gamma(X)}+\alpha^2\norm{\gamma(X)}^2
    \nonumber\\
    &\leq\norm{\alpha X}^2-2\alpha\scal{\alpha X}{X}+\alpha^2\norm{X}^2
    \nonumber\\
    &=0,
  \end{align}
  which leads to $\gamma(\alpha X)=\alpha\gamma(X)$.
\end{proof}

In the convex analysis of spectral functions
in the settings of Euclidean Jordan algebras and matrices
(see \cref{ex:3,ex:4,ex:5}),
the equality case
\begin{equation}
\label{e:eqvn}
\scal{X}{Y}=\scal{\gamma(X)}{\gamma(Y)}
\end{equation}
of the von~Neumann-type trace inequality
is a key condition in the derivations
of formulae to evaluate the
subdifferentials and the proximity operators of spectral functions
via the associated invariant functions.
To obtain constructive formulae -- suitable e.g.\
for algorithmic implementations --
it is crucial to establish a characterization -- in
terms of a ``simultaneous spectral decomposition'' -- for this
equality; see
\cite{Baes07,Livre1,Beck17,Lewis95,Lewis96a,Mordukhovich22}
and the discussions in \cref{sec:4,sec:5}.
In view of this and to facilitate the derivations of results
in \cref{sec:4,sec:5},
we now leverage the spectral decomposition property~\cref{d:2b} in
\cref{d:2} to establish a necessary and sufficient condition
for \cref{e:eqvn} in our abstract framework.

\begin{proposition}
\label{p:3}
Suppose that \cref{a:1} is in force and
let $\DD$ be a nonempty subset of $\FH$ such that
\begin{equation}
\label{e:h9hb}
\dim\aff(\DD-\DD)<\pinf.
\end{equation}
Then
\begin{equation}
\brk[s]!{\,(\forall X\in\DD)(\forall Y\in\DD)\,\,
\scal{X}{Y}=\scal{\gamma(X)}{\gamma(Y)}\,}
\quad\Leftrightarrow\quad
\brk[s]!{\,
(\exi\ad\in\AD)(\forall X\in\DD)\,\,
X=\Lambda_{\ad}\gamma(X)\,}.
\end{equation}
\end{proposition}
\begin{proof}
We employ the technique of \cite[Theorem~2.2]{Lewis96b},
which establishes the result in the setting of \cref{ex:2}.
In the light of \cref{e:h9hb}, the relative algebraic
interior of $\conv\DD$ is nonempty
\cite[property~(vii) on p.~3]{Zalinescu2002},
that is, there exists $Z\in\conv\DD$ such that
\begin{equation}
\label{e:7ud2}
\brk!{\forall Y\in\aff(\DD-\DD)}
\brk!{\exi\delta\in\zeroun}
\brk!{\forall\alpha\in\intv{0}{\delta}}\quad
Z+\alpha Y\in\conv\DD.
\end{equation}
We now establish the implication ``$\Rightarrow$'' by contradiction.
Suppose that
\begin{equation}
\label{e:agi1}
(\forall X\in\DD)(\forall Y\in\DD)\quad
\scal{X}{Y}=\scal{\gamma(X)}{\gamma(Y)},
\end{equation}
take $\ad\in\AD_Z$, and assume that there exists $X\in\DD$ for which
$X\neq\Lambda_{\ad}\gamma(X)$. \cref{p:2}\,\cref{p:2iii} entails
that
\begin{equation}
2\scal{X}{\Lambda_{\ad}\gamma(X)}
=\norm{X}^2
+\norm{\Lambda_{\ad}\gamma(X)}^2
-\norm{X-\Lambda_{\ad}\gamma(X)}^2
=2\norm{X}^2-\norm{X-\Lambda_{\ad}\gamma(X)}^2
<2\norm{X}^2.
\end{equation}
On the other hand,
using \cref{e:7ud2} with $Y=Z-X\in\aff(\DD-\DD)$,
we deduce that there exists $\delta\in\zeroun$ such that
$Z+(\delta/2)(Z-X)\in\conv\DD$,
which implies that there exist families
$(\alpha_i)_{0\leq i\leq n}$ in $\zeroun$ and
$(X_i)_{1\leq i\leq n}$ in $\DD$ such that $\sum_{i=0}^n\alpha_i=1$
and $Z=\alpha_0X+\sum_{i=1}^n\alpha_iX_i$.
Therefore, we derive from property~\cref{d:2c} in \cref{d:2},
\cref{p:2}\,\cref{p:2i}, and \cref{e:agi1} that
\begin{align}
\scal{\gamma(Z)}{\gamma(X)}
&=\scal{\Lambda_{\ad}\gamma(Z)}{\Lambda_{\ad}\gamma(X)}
\nonumber\\
&=\scal{Z}{\Lambda_{\ad}\gamma(X)}
\nonumber\\
&=\alpha_0\scal{X}{\Lambda_{\ad}\gamma(X)}
+\sum_{i=1}^n\alpha_i\scal{X_i}{\Lambda_{\ad}\gamma(X)}
\nonumber\\
&<\alpha_0\norm{X}^2
+\sum_{i=1}^n\alpha_i
\scal!{\gamma(X_i)}{\gamma\brk!{\Lambda_{\ad}\gamma(X)}}
\nonumber\\
&=\alpha_0\norm{X}^2+\sum_{i=1}^n\alpha_i\scal{\gamma(X_i)}{\gamma(X)}
\nonumber\\
&=\alpha_0\norm{X}^2+\sum_{i=1}^n\alpha_i\scal{X_i}{X}
\nonumber\\
&=\scal{Z}{X}
\nonumber\\
&\leq\scal{\gamma(Z)}{\gamma(X)},
\end{align}
which is impossible. The reverse implication follows from the
assumption that the operators $(\Lambda_{\ad})_{\ad\in\AD}$
are linear isometries.
\end{proof}

\begin{remark}
\label{r:3}
Let us comment on \cref{p:3}.
\begin{enumerate}
\item
Condition~\cref{e:h9hb} holds, in particular,
when $\FH$ is finite-dimensional or when $\DD$ is a finite set.
\item
\cref{p:3} specializes to \cite[Theorem~2.2]{Lewis96b} in the
context of normal decomposition systems of \cref{ex:2},
and to \cite[Theorem~4.1]{Gowda22} in the context of Euclidean
Jordan algebras of \cref{ex:3}.
\item
In the settings of \cref{ex:4,ex:5,ex:6}, where $\KB$ is
$\RR$ or $\CC$, and with the choice $\DD=\set{X,Y}$,
we recover from \cref{p:3} classical conditions for
equality in von~Neumann-type trace inequalities;
see \cite{Lewis05a,Rosakis98,Theobald75,Neumann37}.
\end{enumerate}
\end{remark}

We close this section with some basic properties of invariant
functions.

\begin{proposition}
\label{p:6}
Suppose that \cref{a:1} is in force and let
$\varphi\colon\XX\to\RXX$. Then the following hold:
\begin{enumerate}
\item
\label{p:6i}
$\varphi$ is $\SD$-invariant
\,$\Leftrightarrow$\,
$\varphi\circ\tau=\varphi$
\,$\Leftrightarrow$\,
$(\forall\ad\in\AD)$
$\varphi\circ\gamma\circ\Lambda_{\ad}=\varphi$.
\item
\label{p:6ii}
Suppose that $\varphi$ is $\SD$-invariant. Then the following are
satisfied:
\begin{enumerate}
\item
\label{p:6iia}
$\intdom(\varphi\circ\gamma)=\gamma^{-1}(\intdom\varphi)$.
\item
\label{p:6iib}
$\intdom(\varphi\circ\gamma)\neq\emp$
\,$\Leftrightarrow$\,
$\intdom\varphi\neq\emp$.
\end{enumerate}
\item
\label{p:6iii}
Let $\psi\colon\XX\to\RXX$ and suppose that
$\varphi$ and $\psi$ are $\SD$-invariant.
Then $\varphi=\psi$ if and only if
$\restr{\varphi}{\ran\gamma}=\restr{\psi}{\ran\gamma}$.
\end{enumerate}
\end{proposition}
\begin{proof}
\cref{p:6i}:
Use \cref{e:1ae1}, the $\SD$-invariance of $\tau$, and
\cref{e:1ae2}.

\cref{p:6iia}:
Suppose that $X\in\intdom(\varphi\circ\gamma)$ and let
$\varepsilon\in\zeroun$ be such that the closed ball
$B(X;\varepsilon)$ is contained in $\dom(\varphi\circ\gamma)$.
We must show that $\gamma(X)\in\intdom\varphi$.
To do so, let $y\in B(\gamma(X);\varepsilon)$ and $\ad\in\AD_X$.
Then
$\norm{\Lambda_{\ad}y-X}
=\norm{\Lambda_{\ad}y-\Lambda_{\ad}\gamma(X)}
=\norm{y-\gamma(X)}
\leq\varepsilon$, which implies that
$\Lambda_{\ad}y\in B(X;\varepsilon)\subset\dom(\varphi\circ\gamma)$.
Thus, we get from \cref{p:6i} that
$\varphi(y)=(\varphi\circ\gamma)(\Lambda_{\ad}y)<\pinf$.
Hence $B(\gamma(X);\varepsilon)\subset\dom\varphi$ and, therefore,
$\gamma(X)\in\intdom\varphi$.
The reverse inclusion is proved similarly.

\cref{p:6iib}:
By \cref{p:6iia},
$\intdom(\varphi\circ\gamma)\neq\emp$
\,$\Rightarrow$\,
$\intdom\varphi\neq\emp$.
To establish the reverse implication,
assume that $\intdom\varphi\neq\emp$ and let us construct
a point $X\in\FH$ such that $X\in\intdom(\varphi\circ\gamma)$
or, equivalently, thanks to \cref{p:6iia},
$\gamma(X)\in\intdom\varphi$.
Towards this goal, take $x\in\intdom\varphi$,
let $\sd\in\SD$ be such that
$x=\sd\cdot\tau(x)$, and let
$\varepsilon\in\zeroun$ be such that
$B(x;\varepsilon)\subset\dom\varphi$.
For every $y\in B(\tau(x);\varepsilon)$,
since $\SD$ acts on $\XX$ by linear isometries,
we get
$\norm{\sd\cdot y-x}
=\norm{\sd\cdot y-\sd\cdot\tau(x)}
=\norm{y-\tau(x)}
\leq\varepsilon$ and, in turn,
$\sd\cdot y\in B(x;\varepsilon)\subset\dom\varphi$,
from which and the $\SD$-invariance of $\varphi$ we infer that
$\varphi(y)=\varphi(\sd\cdot y)<\pinf$.
Hence $B(\tau(x);\varepsilon)\subset\dom\varphi$ and we have thus
shown that $\tau(x)\in\intdom\varphi$.
Now take $\ad\in\AD$ and set $X=\Lambda_{\ad}x$.
By \cref{e:1ae2}, $\gamma(X)=\tau(x)\in\intdom\varphi$.

\cref{p:6iii}:
Combine \cref{p:6i} and \cref{p:2}\,\cref{p:2ii}.
\end{proof}

\section{Conjugation and subdifferentiability of spectral functions}
\label{sec:4}

Let $\HH$ be a real Hilbert space.
A function $f\colon\HH\to\RXX$ is said to be convex if
\begin{equation}
  (\forall x\in\dom f)(\forall y\in\dom f)
  \brk!{\forall\alpha\in\zeroun}\quad
  f\brk!{\alpha x+(1-\alpha)y}\leq\alpha f(x)+(1-\alpha)f(y),
\end{equation}
where $\dom f=\menge{x\in\HH}{f(x)<\pinf}$ is the domain of $f$,
and $f$ is said to be proper if $\minf\notin f(\HH)$ and
$\dom f\neq\emp$.
We denote by $\Gamma_0(\HH)$ the set of proper lower semicontinuous
convex functions from $\HH$ to $\RX$.

Two fundamental convex analytical objects attached to a proper
function $f\colon\HH\to\RX$ are its conjugate
\begin{equation}
  \label{e:lege}
  f^*\colon\HH\to\RXX\colon
  y\mapsto\sup_{x\in\HH}\brk!{\scal{x}{y}-f(x)},
\end{equation}
and its subdifferential
\begin{equation}
  \partial f\colon\HH\to 2^{\HH}\colon
  x\mapsto\menge{y\in\HH}{(\forall z\in\HH)\,\,\scal{z-x}{y}+f(x)\leq f(z)},
\end{equation}
which are related by the Fenchel--Young equality
\cite[Proposition~16.10]{Livre1}
\begin{equation}
  \label{e:vpdz}
  (\forall x\in\HH)(\forall y\in\HH)\quad
  y\in\partial f(x)
  \,\,\Leftrightarrow\,\,
  f(x)+f^*(y)=\scal{x}{y}
  \,\,\Leftrightarrow\,\,
  x\in\Argmin\brk!{f-\scal{\Cdot}{y}},
\end{equation}
where $\Argmin g$ denotes the set of minimizers of a function
$g\colon\HH\to\RX$, that is,
\begin{equation}
  \Argmin g=
  \begin{cases}
    \menge{x\in\HH}{g(x)=\inf g(\HH)}&\text{if}\,\,\dom g\neq\emp,\\
    \emp&\text{otherwise}.
  \end{cases}
\end{equation}
In this section, we derive formulae for evaluating -- and not
merely characterizing -- the conjugate and
subdifferential of a spectral function in terms of those of the associated
invariant function.
Towards this end, we first establish in \cref{t:2} a
reduced minimization principle, which will play a fundamental
role in our analysis in this section as well as in \cref{sec:5}.
In essence, given a spectral function
$\Phi=\varphi\circ\gamma\colon\FH\to\RX$, it describes how the
solutions to
\begin{equation}
\label{e:pbig}
\minimize{X\in\FH}{\Phi(X)-\scal{X}{Y}}
\end{equation}
can be explicitly constructed from solutions
of the reduced problem
\begin{equation}
\label{e:psmal}
\minimize{x\in\XX}{\varphi(x)-\scal{x}{\gamma(Y)}}
\end{equation}
by leveraging the embedding operators
$(\Lambda_{\ad})_{\ad\in\AD}$ and the spectral decomposition
property \cref{d:2b} in \cref{d:2}.

\begin{theorem}[reduced minimization principle]
  \label{t:2}
  Suppose that \cref{a:1} is in force,
  let $\varphi\colon\XX\to\RX$ be proper and $\SD$-invariant,
  and let $Y\in\FH$. Set
  \begin{equation}
    \SX=\Argmin\brk!{\varphi\circ\gamma-\scal{\Cdot}{Y}}
    \quad\text{and}\quad
    S=\Argmin\brk!{\varphi-\scal{\Cdot}{\gamma(Y)}}.
  \end{equation}
  Then
  \begin{equation}
    \label{e:owc1}
    \SX=\menge{\Lambda_{\bd}x}{x\in S\,\,\text{and}\,\,\bd\in\AD_Y}
  \end{equation}
  and
  \begin{equation}
    \label{e:owc2}
    (\forall X\in\FH)\quad
    X\in\SX
    \quad\Leftrightarrow\quad
    \begin{cases}
      \gamma(X)\in S\\
      (\exi\ad\in\AD)\,\,
      X=\Lambda_{\ad}\gamma(X)\,\,\text{and}\,\,
      Y=\Lambda_{\ad}\gamma(Y).
    \end{cases}
  \end{equation}
  Moreover,
  \begin{equation}
  \label{e:owc3}
  \inf_{X\in\FH}\brk!{\varphi\brk!{\gamma(X)}-\scal{X}{Y}}
  =\inf_{x\in\XX}\brk!{\varphi(x)-\scal{x}{\gamma(Y)}}.
  \end{equation}
\end{theorem}
\begin{proof}
Since $\varphi$ is proper,
\cref{p:6}\,\cref{p:6i} ensures that $\varphi\circ\gamma$ is
likewise. Now set
\begin{equation}
\label{e:rlby}
\mu=\inf_{X\in\FH}\brk!{\varphi\brk!{\gamma(X)}-\scal{X}{Y}}
\quad\text{and}\quad
\nu=\inf_{x\in\XX}\brk!{\varphi(x)-\scal{x}{\gamma(Y)}}.
\end{equation}
Using the $\SD$-invariance of $\varphi$ and
\cref{p:6}\,\cref{p:6i}, we get
\begin{align}
(\forall x\in\XX)(\forall\bd\in\AD_Y)\quad
\varphi(x)-\scal{x}{\gamma(Y)}
&=\varphi\brk!{\gamma(\Lambda_{\bd}x)}
-\scal{\Lambda_{\bd}x}{\Lambda_{\bd}\gamma(Y)}
\nonumber\\
&=\varphi\brk!{\gamma(\Lambda_{\bd}x)}-\scal{\Lambda_{\bd}x}{Y}
\label{e:zmc1}\\
&\geq\mu.
\label{e:zmc2}
\end{align}
Hence $\nu\geq\mu$.
On the other hand, it results from property~\cref{d:2c} in
\cref{d:2} that
\begin{equation}
\label{e:5oag}
(\forall X\in\FH)\quad
\varphi\brk!{\gamma(X)}-\scal{X}{Y}
\geq\varphi\brk!{\gamma(X)}-\scal{\gamma(X)}{\gamma(Y)}
\geq\nu.
\end{equation}
Therefore, taking the infimum over $X\in\FH$ yields
\cref{e:owc3}, that is,
\begin{equation}
\label{e:8hif}
\mu=\nu.
\end{equation}
To proceed further, assume that $X\in\SX$. Then
$\varphi(\gamma(X))\in\RR$.
In turn, since \cref{e:5oag} entail that
\begin{equation}
\nu
=\mu
=\varphi\brk!{\gamma(X)}-\scal{X}{Y}
\geq\varphi\brk!{\gamma(X)}-\scal{\gamma(X)}{\gamma(Y)}
\geq\nu,
\end{equation}
we get
\begin{equation}
\label{e:t0u8}
\varphi\brk!{\gamma(X)}-\scal{\gamma(X)}{\gamma(Y)}
=\nu
=\inf_{x\in\XX}\brk!{\varphi(x)-\scal{x}{\gamma(Y)}}
\end{equation}
and
\begin{equation}
\label{e:t0u9}
\scal{X}{Y}=\scal{\gamma(X)}{\gamma(Y)}.
\end{equation}
Hence $\gamma(X)\in S$ and, thanks to \cref{p:3}
(applied to $\DD=\set{X,Y}$),
$\AD_X\cap\AD_Y\neq\emp$.
To summarize, we have established the
inclusion ``$\subset$'' in \cref{e:owc1}, together with the
implication ``$\Rightarrow$'' in \cref{e:owc2}.
To complete the proof, it suffices to establish
the reverse inclusion in \cref{e:owc1}.
In fact, for every $x\in S$ and every $\bd\in\AD_Y$,
we derive from \cref{e:zmc1} and \cref{e:8hif} that
\begin{equation}
\varphi\brk!{\gamma(\Lambda_{\bd}x)}-\scal{\Lambda_{\bd}x}{Y}
=\varphi(x)-\scal{x}{\gamma(Y)}
=\nu
=\mu
\end{equation}
and, therefore, that $\Lambda_{\bd}x\in\SX$.
\end{proof}

\begin{remark}
\label{r:8}
Let us relate \cref{t:2} to the literature.
\begin{enumerate}
\item
\label{r:8i}
The formula \cref{e:owc1} provides a constructive reduction of
\cref{e:pbig} to the reduced problem \cref{e:psmal} as follows:
One first solves \cref{e:psmal},
then finds a spectral decomposition of $Y$,
and finally lifts a solution of \cref{e:psmal} to a solution
of \cref{e:pbig} via the corresponding embedding operator.
\item
\label{r:8ii}
It can be shown that $(\FH,\XX,\gamma)$ is a Fan--Theobald--von
Neumann system in the sense of \cite[Definition~2.2]{Gowda19}.
A result closely related to \cref{t:2} is
\cite[Theorem 3.7\,(ii)]{Gowda19}.
In our setting, applying \cite[Theorem 3.7\,(ii)]{Gowda19}
yields only the equality of infima in
\cref{e:owc3}, as well as the following \emph{nonconstructive}
characterization of elements of $\SX$
(cf.~\cref{e:owc2}):
\begin{equation}
\label{e:ftvn5}
(\forall X\in\FH)\quad
X\in\SX
\quad\Leftrightarrow\quad
\brk[s]!{\,
\gamma(X)\in S\,\,\text{and}\,\,
\scal{X}{Y}=\scal{\gamma(X)}{\gamma(Y)}
\,
}.
\end{equation}
\item
In the eigenvalue decomposition setting of \cref{ex:4}, \cref{t:2}
subsumes, in particular, \cite[Theorem~1]{Benfenati20},
where only the inclusion ``$\supset$'' in \cref{e:owc1} is
established.
\end{enumerate}
\end{remark}

The reduced minimization principle (\cref{t:2}) yields at once the
following calculus rule for evaluating the conjugate of a spectral
function via that of its associated invariant function.

\begin{proposition}
\label{p:8}
Suppose that \cref{a:1} is in force
and let $\varphi\colon\XX\to\RXX$ be $\SD$-invariant.
Then $(\varphi\circ\gamma)^*=\varphi^*\circ\gamma$.
\end{proposition}
\begin{proof}
If $\varphi$ is proper, the desired conjugacy formula follows from
\cref{e:owc3} in \cref{t:2}.
Thus, it remains to address the case where $\varphi$
is not proper.
Suppose that $\varphi$ is not proper.
If $\varphi\equiv\pinf$,
then $\varphi\circ\gamma\equiv\pinf$,
from which and \cref{e:lege} we obtain
$(\varphi\circ\gamma)^*\equiv\minf\equiv\varphi^*\circ\gamma$.
If $\minf\in\varphi(\XX)$,
we deduce from \cref{p:6}\,\cref{p:6i} that
$\minf\in(\varphi\circ\gamma)(\FH)$ and, therefore, from
\cref{e:lege} that
$(\varphi\circ\gamma)^*\equiv\pinf\equiv\varphi^*\circ\gamma$.
\end{proof}

Next, we utilize the above conjugacy formula together with the
Fenchel--Moreau biconjugation theorem to characterize the class of
proper lower semicontinuous convex spectral functions. This task
requires the following invariance property, which is essential
for the remainder of the paper.

\begin{corollary}
\label{c:2}
Suppose that \cref{a:1} is in force
and let $\varphi\colon\XX\to\RXX$ be $\SD$-invariant.
Then $\varphi^*$ is $\SD$-invariant.
\end{corollary}
\begin{proof}
Using \cref{p:6}\,\cref{p:6i} and
\cref{p:8} (applied to the spectral decomposition system
constructed in \cref{ex:9}), we obtain
$\varphi^*=(\varphi\circ\tau)^*=\varphi^*\circ\tau$.
Therefore, invoking \cref{p:6}\,\cref{p:6i} once more, we conclude
that $\varphi^*$ is $\SD$-invariant.
\end{proof}

\begin{corollary}
\label{c:8}
Suppose that \cref{a:1} is in force and let
$\varphi\colon\XX\to\RX$ be proper and $\SD$-invariant.
Then $\varphi\circ\gamma\in\Gamma_0(\FH)$ if and only if
$\varphi\in\Gamma_0(\XX)$.
\end{corollary}
\begin{proof}
\cref{c:2} implies that $\varphi^*$ and $\varphi^{**}$ are
$\SD$-invariant. In addition, since $\varphi$ is proper, we infer
that $\minf\notin\varphi^*(\XX)$
\cite[Proposition~13.10\,(ii)]{Livre1}
and that $\varphi\circ\gamma$ is also proper.
Hence, we derive from the Fenchel--Moreau biconjugation theorem
\cite[Theorem~13.37]{Livre1},
\cref{p:8}, and \cref{p:6}\,\cref{p:6iii} that
$\varphi\circ\gamma\in\Gamma_0(\FH)$
\,$\Leftrightarrow$\,
$(\varphi\circ\gamma)^{**}=\varphi\circ\gamma$
\,$\Leftrightarrow$\,
$\varphi^{**}\circ\gamma=\varphi\circ\gamma$
\,$\Leftrightarrow$\,
$\restr{\varphi^{**}}{\ran\gamma}=
\restr{\varphi}{\ran\gamma}$
\,$\Leftrightarrow$\,
$\varphi^{**}=\varphi$
\,$\Leftrightarrow$\,
$\varphi\in\Gamma_0(\XX)$,
as claimed.
\end{proof}

We now turn to the subdifferentiability of spectral functions.
More precisely, we show how subgradients of a spectral function
can be explicitly constructed -- and not merely characterized --
from those of the associated invariant function.

\begin{proposition}
\label{p:9}
Suppose that \cref{a:1} is in force,
let $\varphi\colon\XX\to\RX$ be proper and $\SD$-invariant,
and let $X\in\FH$. Then
\begin{equation}
\label{e:vep8}
\partial(\varphi\circ\gamma)(X)
=\menge{\Lambda_{\ad}y}{
y\in\partial\varphi\brk!{\gamma(X)}\,\,
\text{and}\,\,\ad\in\AD_X}
\end{equation}
and
\begin{equation}
\label{e:vep9}
(\forall Y\in\FH)\quad
Y\in\partial(\varphi\circ\gamma)(X)
\quad\Leftrightarrow\quad
\begin{cases}
\gamma(Y)\in\partial\varphi\brk!{\gamma(X)}\\
(\exi\ad\in\AD)\,\,
X=\Lambda_{\ad}\gamma(X)\,\,\text{and}\,\,
Y=\Lambda_{\ad}\gamma(Y).
\end{cases}
\end{equation}
\end{proposition}
\begin{proof}
We deduce from \cref{e:vpdz} and \cref{t:2} that
\begin{align}
(\forall Y\in\FH)\quad
Y\in\partial(\varphi\circ\gamma)(X)
&\Leftrightarrow
X\in\Argmin\brk!{\varphi\circ\gamma-\scal{\Cdot}{Y}}
\nonumber\\
&\Leftrightarrow
\begin{cases}
\gamma(X)\in\Argmin\brk!{\varphi-\scal{\Cdot}{\gamma(Y)}}\\
(\exi\ad\in\AD)\,\,
X=\Lambda_{\ad}\gamma(X)\,\,\text{and}\,\,
Y=\Lambda_{\ad}\gamma(Y)
\end{cases}\
\nonumber\\
&\Leftrightarrow
\begin{cases}
\gamma(Y)\in\partial\varphi\brk!{\gamma(X)}\\
(\exi\ad\in\AD)\,\,
X=\Lambda_{\ad}\gamma(X)\,\,\text{and}\,\,
Y=\Lambda_{\ad}\gamma(Y),
\end{cases}
\end{align}
which establishes \cref{e:vep9} and the inclusion ``$\subset$'' in
\cref{e:vep8}. To establish the reverse inclusion in \cref{e:vep8},
suppose that $y\in\partial\varphi(\gamma(X))$ and let
$\ad\in\AD_X$. \cref{c:2} asserts that $\varphi^*$ is
$\SD$-invariant and, in turn, \cref{p:6}\,\cref{p:6i} yields
$\varphi^*\circ\gamma\circ\Lambda_{\ad}=\varphi^*$. Therefore,
by \cref{p:8} and \cref{e:vpdz},
\begin{align}
  (\varphi\circ\gamma)(X)+(\varphi\circ\gamma)^*\brk{\Lambda_{\ad}y}
  &=\varphi\brk!{\gamma(X)}+\varphi^*\brk!{\gamma(\Lambda_{\ad}y)}
  \nonumber\\
  &=\varphi\brk!{\gamma(X)}+\varphi^*(y)
  \nonumber\\
  &=\scal{\gamma(X)}{y}
  \nonumber\\
  &=\scal{\Lambda_{\ad}\gamma(X)}{\Lambda_{\ad}y}
  \nonumber\\
  &=\scal{X}{\Lambda_{\ad}y}.
\end{align}
Consequently, invoking \cref{e:vpdz} once more,
we obtain $\Lambda_{\ad}y\in\partial(\varphi\circ\gamma)(X)$.
\end{proof}

\begin{remark}
\label{r:5}
Let us make a connection between \cref{p:9} and existing works.
\begin{enumerate}
\item
\label{r:5i}
Following up on the discussion in \cref{r:8}\,\cref{r:8ii}, let us
compare \cref{p:9} to \cite[Theorem~4.2]{Jeong24},
which concerns subdifferentials of spectral functions
in the more abstract Fan--Theobald--von Neumann framework.
Applying the latter to the Fan--Theobald--von Neumann system
$(\FH,\XX,\gamma)$ yields only the \emph{characterization}
\begin{equation}
(\forall Y\in\FH)\quad
Y\in\partial(\varphi\circ\gamma)(X)
\quad\Leftrightarrow\quad
\brk[s]!{\,
\gamma(Y)\in\partial\varphi\brk!{\gamma(X)}
\,\,\text{and}\,\,
\scal{X}{Y}=\scal{\gamma(X)}{\gamma(Y)}
\,};
\end{equation}
cf.~\cref{e:vep8}.
\item
The realization of \cref{p:9} in normal decomposition systems
(\cref{ex:2}) is established in \cite[Theorem~4.5]{Lewis96b}.
\item
Specializing the membership characterization \cref{e:vep9} to the
context of Euclidean Jordan algebras (\cref{ex:3}) yields
\cite[Corollary~31]{Baes07}.
\item
Special cases of \cref{p:9} in the context of eigenvalue and
singular value decompositions (\cref{ex:4,ex:5}) can be found in
\cite{Lewis96a,Lewis95}, respectively.
\end{enumerate}
\end{remark}

An immediate consequence of the explicit construction
in \cref{e:vep8} of \cref{p:9} is the following
relationship between the uniqueness of subgradients
of a spectral function and that of its associated invariant
function, which will be required in subsequent proofs.

\begin{corollary}
\label{c:7}
Suppose that \cref{a:1} is in force,
let $\varphi\colon\XX\to\RX$ be proper and $\SD$-invariant,
and let $X\in\FH$. Then
$\partial(\varphi\circ\gamma)(X)$
is a singleton if and only if
$\partial\varphi(\gamma(X))$ is a singleton.
\end{corollary}
\begin{proof}
Recall that a convex subset $C$ of a real Hilbert space $\HH$ is a
singleton if and only if $\menge{\norm{x}}{x\in C}$ is a singleton
\cite[Proposition~3.7]{Livre1}.
It results from \cref{p:9} that
\begin{equation}
\menge{\norm{Y}}{Y\in\partial(\varphi\circ\gamma)(X)}
=\menge{\norm{y}}{y\in\partial\varphi\brk!{\gamma(X)}}.
\end{equation}
Hence, since $\partial(\varphi\circ\gamma)(X)$ and
$\partial\varphi(\gamma(X))$ are both convex
\cite[Proposition~16.4\,(iii)]{Livre1},
the claim follows.
\end{proof}

Next, we leverage the lifting formula for subdifferentials in
\cref{p:9} to study differentiability properties of convex spectral
functions, with particular attention to the distinction between
Fr\'echet and Gateaux differentiability in the infinite-dimensional
setting. Our proof relies on the following well-known
characterizations of these notions. Here $\weakly$ denotes weak
convergence.

\begin{lemma}
\label{l:3}
Let $\HH$ be a real Hilbert space, let $f\in\Gamma_0(\HH)$,
and let $x\in\dom f$.
Then the following hold:
\begin{enumerate}
\item
\label{l:3i}
The following are equivalent:
\begin{enumerate}
\item
\label{l:3ia}
$f$ is Gateaux differentiable at $x$.
\item
\label{l:3ib}
$x\in\intdom f$ and $\partial f(x)$ is a singleton.
\item
\label{l:3ic}
$x\in\intdom f$ and,
for every sequence $(x_n,y_n)_{n\in\NN}$ in $\gra\partial f$ and
every $y\in\partial f(x)$,
$x_n\to x$ \,$\Rightarrow$\, $y_n\weakly y$.
\end{enumerate}
\item
\label{l:3ii}
The following are equivalent:
\begin{enumerate}
\item
\label{l:3iia}
$f$ is Fr\'echet differentiable at $x$.
\item
\label{l:3iib}
$x\in\intdom f$ and,
for every sequence $(x_n,y_n)_{n\in\NN}$ in $\gra\partial f$ and
every $y\in\partial f(x)$,
$x_n\to x$ \,$\Rightarrow$\, $y_n\to y$.
\end{enumerate}
\end{enumerate}
\end{lemma}
\begin{proof}
\cref{l:3ia}\,$\Rightarrow$\,\cref{l:3ib}:
This follows from
\cite[Propositions~17.50 and 17.31\,(i)]{Livre1}.

\cref{l:3ib}\,$\Rightarrow$\,\cref{l:3ia}:
Since a function in $\Gamma_0(\HH)$ is continuous on the interior
of its domain \cite[Proposition~16.27]{Livre1},
$f$ is continuous at $x$.
Therefore, since $\partial f(x)$ is a singleton, we deduce from
\cite[Proposition~17.31\,(ii)]{Livre1} that $f$ is Gateaux
differentiable at $x$.

\cref{l:3ia}\,$\Leftrightarrow$\,\cref{l:3ic}:
Use the equivalence
\cref{l:3ia}\,$\Leftrightarrow$\,\cref{l:3ib}
and \cite[Proposition~17.39]{Livre1}.

\cref{l:3ii}:
See \cite[Proposition~17.41]{Livre1}.
\end{proof}

\begin{proposition}
\label{p:32}
Suppose that \cref{a:1} is in force,
let $\varphi\in\Gamma_0(\XX)$ be $\SD$-invariant,
and let $X\in\FH$ be such that $\gamma(X)\in\dom\varphi$. Then
$\varphi\circ\gamma$ is
Gateaux (resp.~Fr\'echet) differentiable at $X$ if and only if
$\varphi$ is Gateaux (resp.~Fr\'echet) differentiable at
$\gamma(X)$, in which case
\begin{equation}
\label{e:h5k3}
\gamma\brk!{\nabla(\varphi\circ\gamma)(X)}
=\nabla\varphi\brk!{\gamma(X)}
\quad\text{and}\quad
(\forall\ad\in\AD_X)\,\,
\nabla(\varphi\circ\gamma)(X)
=\Lambda_{\ad}\brk!{\nabla\varphi\brk!{\gamma(X)}}.
\end{equation}
\end{proposition}
\begin{proof}
\cref{c:8} ensures that $\varphi\circ\gamma\in\Gamma_0(\FH)$,
while \cref{p:6}\,\cref{p:6iia} gives
$\intdom(\varphi\circ\gamma)=\gamma^{-1}(\intdom\varphi)$.
We derive from \cref{l:3}\,\cref{l:3i} and \cref{c:7} that
\begin{align}
\label{e:9pxp}
\text{$\varphi\circ\gamma$ is Gateaux differentiable at $X$}
&\Leftrightarrow
\text{$X\in\intdom(\varphi\circ\gamma)$
and $\partial(\varphi\circ\gamma)(X)$ is a singleton}
\nonumber\\
&\Leftrightarrow
\text{$\gamma(X)\in\intdom\varphi$ and
$\partial\varphi\brk!{\gamma(X)}$ and is a singleton}
\nonumber\\
&\Leftrightarrow
\text{$\varphi$ is Gateaux differentiable at $\gamma(X)$}.
\end{align}
Next, we establish the equivalence for the case of Fr\'echet
differentiability.
Assume that $\varphi\circ\gamma$ is Fr\'echet differentiable at
$X$ and take $\ad\in\AD_X$.
On the one hand, \cref{p:6}\,\cref{p:6i} yields
$\varphi=(\varphi\circ\gamma)\circ\Lambda_{\ad}$.
On the other hand,
$\Lambda_{\ad}$ is Fr\'echet differentiable at $\gamma(X)$ and
$\varphi\circ\gamma$ is Fr\'echet differentiable at
$X=\Lambda_{\ad}\gamma(X)$.
Hence, the chain rule implies that
$\varphi=(\varphi\circ\gamma)\circ\Lambda_{\ad}$ is Fr\'echet
differentiable at $\gamma(X)$.
Conversely, assume that $\varphi$ is Fr\'echet differentiable at
$\gamma(X)$.
It results from \cref{l:3}\,\cref{l:3ii} that
$\gamma(X)\in\intdom\varphi$ and, therefore, that
$X\in\gamma^{-1}(\intdom\varphi)=\intdom(\varphi\circ\gamma)$.
Now let $Y\in\partial(\varphi\circ\gamma)(X)$
and let $(X_n,Y_n)_{n\in\NN}$ be a sequence in
$\gra\partial(\varphi\circ\gamma)$ such that
$X_n\to X$. By \cref{p:9},
$\gamma(Y)\in\partial\varphi\brk{\gamma(X)}$
and $(\forall n\in\NN)$
$\gamma(Y_n)\in\partial\varphi\brk{\gamma(X_n)}$.
At the same time, the nonexpansiveness of $\gamma$
(\cref{p:2}\,\cref{p:2iv}) gives
$\norm{\gamma(X_n)-\gamma(X)}\leq\norm{X_n-X}\to 0$.
Thus, because $\varphi$ is Fr\'echet differentiable at
$\gamma(X)$, it follows from
\cref{l:3}\,\cref{l:3ii} that $\gamma(Y_n)\to\gamma(Y)$
and, in turn, from \cref{p:2}\,\cref{p:2iii} that
\begin{equation}
\label{e:tng1}
\lim_{n\to\pinf}\norm{Y_n}
=\lim_{n\to\pinf}\norm{\gamma(Y_n)}
=\norm{\gamma(Y)}
=\norm{Y}.
\end{equation}
Additionally, since \cref{e:9pxp} implies that $\varphi\circ\gamma$ is
Gateaux differentiable at $X$, the equivalence
\cref{l:3ia}\,$\Leftrightarrow$\,\cref{l:3ic} in
\cref{l:3}\,\cref{l:3i} entails that
$Y_n\weakly Y$. This and \cref{e:tng1} force
$Y_n\to Y$ \cite[Lemma~2.51\,(i)]{Livre1}.
Consequently, invoking \cref{l:3}\,\cref{l:3ii} once more,
we deduce that $\varphi\circ\gamma$ is Fr\'echet differentiable at
$X$. Finally, in both cases, \cref{e:h5k3} follows from \cref{p:9}
and \cite[Proposition~17.31\,(i)]{Livre1}.
\end{proof}

We end this section with a complement to the reduced minimization
principle of \cref{t:2}. In \cref{p:54}, we provide conditions under
which the uniqueness property of solutions to \cref{e:pbig} is
equivalent to that of the reduced problem \cref{e:psmal}.
Doing so requires the following generalization of Ky Fan's
majorization theorem, which compares the spectrum of a sum and the
sum of the spectra.

\begin{proposition}
\label{p:4}
Suppose that \cref{a:1} is in force and that, in addition,
$\XX$ is finite-dimensional.
Let $m\in\NN\smallsetminus\set{0}$,
let $(X_i)_{1\leq i\leq m}\in\FH^m$,
and let $(\alpha_i)_{1\leq i\leq m}\in\RP^m$.
Then
\begin{equation}
\gamma\brk3{\sum_{i=1}^m\alpha_iX_i}
\in\conv\brk4{\SD\cdot\sum_{i=1}^m\alpha_i\gamma(X_i)},
\end{equation}
in which we employ the notation $(\forall x\in\XX)$
$\SD\cdot x=\menge{\sd\cdot x}{\sd\in\SD}$.
\end{proposition}
\begin{proof}
Set
\begin{equation}
X=\sum_{i=1}^m\alpha_iX_i,
\quad
y=\sum_{i=1}^m\alpha_i\gamma(X_i),
\quad\text{and}\quad
C=\conv(\SD\cdot y).
\end{equation}
First, we claim that
\begin{equation}
\label{e:s56u}
\text{$C$ is compact.}
\end{equation}
By virtue of \cite[Corollary~2.30]{Rock09},
it suffices to show that $\SD\cdot y$ is compact.
Observe that $\SD\cdot y$ is bounded thanks to $(\forall\sd\in\SD)$
$\norm{\sd\cdot y}=\norm{y}$.
Now let $(\sd_n)_{n\in\NN}$ be a sequence in $\SD$ such that the
sequence $(\sd_n\cdot y)_{n\in\NN}$ converges to some $z\in\XX$.
Since $\ran\gamma=\ran\tau$ is a convex cone
(\cref{p:2}\,\cref{p:2ii}), we get $y\in\ran\tau$. Hence, by
\cref{p:1}\,\cref{p:1i},
\begin{equation}
\label{e:qphs}
y=\tau(y).
\end{equation}
In turn, using the $\SD$-invariance and nonexpansiveness
(\cref{p:1}\,\cref{p:1iv}) of $\tau$, we obtain
$\norm{y-\tau(z)}
=\norm{\tau(y)-\tau(z)}
=\norm{\tau(\sd_n\cdot y)-\tau(z)}
\leq\norm{\sd_n\cdot y-z}
\to 0$.
Hence $\tau(z)=y$, from which and \cref{e:1ae1} we get
$z\in\SD\cdot y$.
This shows that $\SD\cdot y$ is closed.
Thus, inasmuch as $\XX$ is finite-dimensional,
$\SD\cdot y$ is compact, as claimed.
Next, we derive from property~\cref{d:2c} in \cref{d:2},
\cref{e:1ae2}, and \cref{e:qphs} that
\begin{align}
(\forall z\in\XX)(\forall\ad\in\AD_X)\quad
\scal{\gamma(X)}{z}
&=\scal{\Lambda_{\ad}\gamma(X)}{\Lambda_{\ad}z}
\nonumber\\
&=\scal{X}{\Lambda_{\ad}z}
\nonumber\\
&=\sum_{i=1}^m\alpha_i\scal{X_i}{\Lambda_{\ad}z}
\nonumber\\
&\leq\sum_{i=1}^m\alpha_i
\scal!{\gamma(X_i)}{\gamma\brk{\Lambda_{\ad}z}}
\nonumber\\
&=\scal{y}{\tau(z)}.
\end{align}
On the other hand, for every $z\in\XX$,
upon choosing $\ts\in\SD$ for which
$z=\ts\cdot\tau(z)$, we deduce from
\cref{l:1-}\,\cref{l:1-ii} and
\cite[Proposition~11.1\,(iii)]{Livre1} that
\begin{equation}
\scal{y}{\tau(z)}
=\scal{\ts\cdot y}{\ts\cdot\tau(z)}
=\scal{\ts\cdot y}{z}
\leq\sup_{\sd\in\SD}\scal{\sd\cdot y}{z}
=\sup_{w\in C}\scal{w}{z}.
\end{equation}
To summarize, we have shown that
$(\forall z\in\XX)$
$\scal{\gamma(X)}{z}\leq\sup_{w\in C}\scal{w}{z}$.
Consequently, since $C$ is closed and convex, we conclude via a
consequence of the separation theorem
\cite[Proposition~7.11]{Livre1} that
$\gamma(X)\in C$.
\end{proof}

\begin{remark}
\label{r:1}
Here are some noteworthy instances of \cref{p:4}.
\begin{enumerate}
\item
Specializing \cref{p:4} to the Euclidean Jordan algebra setting of
\cref{ex:3} yields
\begin{equation}
(\forall X_1\in\FH)
(\forall X_2\in\FH)\quad
\lambda(X_1+X_2)\in\conv\brk2{\PS^N\cdot\brk!{\lambda(X_1)+\lambda(X_2)}}.
\end{equation}
In particular, by choosing $\FH=\HS^N(\KB)$ (see \cref{ex:4}),
we obtain
\begin{equation}
\brk!{\forall X_1\in\HS^N(\KB)}
\brk!{\forall X_2\in\HS^N(\KB)}\quad
\lambda(X_1+X_2)\in\conv\brk2{\PS^N\cdot\brk!{\lambda(X_1)+\lambda(X_2)}},
\end{equation}
which is Ky Fan's majorization theorem for the eigenvalues of a
sum of Hermitian matrices in the real and complex cases
\cite[Theorem~9.G.1]{Marshall11}.
\item
In the singular value decomposition setting of \cref{ex:5},
we deduce from \cref{p:4} the inclusion
\begin{equation}
\brk!{\forall X_1\in\CC^{M\times N}}
\brk!{\forall X_2\in\CC^{M\times N}}\quad
\sigma(X_1+X_2)\in\conv\brk2{\PS_{\pm}^m\cdot
\brk!{\sigma(X_1)+\sigma(X_2)}},
\end{equation}
which is Ky Fan's weak majorization theorem for the singular
values of a sum of rectangular matrices \cite[9.G.1.d]{Marshall11}.
\item
We infer from \cite[Propositions~3.3 and 3.4]{Lewis00} that the
semisimple real Lie algebra setting of \cite[Theorem~2]{Tam98} is a
normal decomposition system (see \cref{ex:2}),
and we therefore recover from \cref{p:4} the extension
\cite[Theorem~2]{Tam98} of the above two Ky Fan's majorization
theorems to semisimple real Lie algebras.
\end{enumerate}
\end{remark}

\begin{proposition}
\label{p:54}
Consider the setting of \cref{t:2}. Then the following hold:
\begin{enumerate}
\item
\label{p:54i}
Suppose that $\XX$ is finite-dimensional. Then
$\SX$ is convex if and only if $S$ is convex.
\item
\label{p:54ii}
Suppose that one of the following is satisfied:
\begin{enumerate}
\item
\label{p:54iia}
$\XX$ is finite-dimensional.
\item
\label{p:54iib}
$\varphi\in\Gamma_0(\XX)$.
\end{enumerate}
Then $\SX$ is a singleton if and only if $S$ is a singleton.
\end{enumerate}
\end{proposition}
\begin{proof}
\cref{p:54i}:
Take $\alpha\in\zeroun$ and $\bd\in\AD_Y$.
Suppose first that $\SX$ is convex and that $x_0$
and $x_1$ lie in $S$, and set $x=(1-\alpha)x_0+\alpha x_1$.
On account of \cref{t:2}, the points $\Lambda_{\bd}x_0$ and
$\Lambda_{\bd}x_1$ lie in the convex set $\SX$, which leads to
$\Lambda_{\bd}x=(1-\alpha)\Lambda_{\bd}x_0+\alpha\Lambda_{\bd}x_1\in\SX$.
Hence, we derive from \cref{e:owc3} in \cref{t:2} and
\cref{p:6}\,\cref{p:6i} that
\begin{align}
\inf_{x\in\XX}\brk!{\varphi(x)-\scal{x}{\gamma(Y)}}
&=\inf_{X\in\FH}\brk!{\varphi\brk!{\gamma(X)}-\scal{X}{Y}}
\nonumber\\
&=\varphi\brk!{\gamma(\Lambda_{\bd}x)}-\scal{\Lambda_{\bd}x}{Y}
\nonumber\\
&=\varphi(x)-\scal{\Lambda_{\bd}x}{\Lambda_{\bd}\gamma(Y)}
\nonumber\\
&=\varphi(x)-\scal{x}{\gamma(Y)},
\end{align}
from which it follows that $x\in S$.
Conversely, suppose that $S$ is convex and that $X_0$ and $X_1$ lie in
$\SX$, and set $X=(1-\alpha)X_0+\alpha X_1$.
In the light of \cref{t:2}, we must show that
$\AD_X\cap\AD_Y\neq\emp$ and that
$\gamma(X)\in S$.
We infer from \cref{t:2} and \cref{p:3} that
\begin{equation}
\label{e:8gma}
\brk!{\forall i\in\set{0,1}}\quad
\gamma(X_i)\in S
\quad\text{and}\quad
\scal{X_i}{Y}=\scal{\gamma(X_i)}{\gamma(Y)}.
\end{equation}
At the same time, \cref{p:4} ensures the existence of finite families
$(\sd_j)_{j\in J}$ in $\SD$ and $(\beta_j)_{j\in J}$ in $\rzeroun$ such
that
\begin{equation}
\label{e:uvap}
\sum_{j\in J}\beta_j=1
\quad\text{and}\quad
\gamma(X)=\sum_{j\in J}\beta_j\brk2{
\sd_j\cdot\brk!{(1-\alpha)\gamma(X_0)+\alpha\gamma(X_1)}}.
\end{equation}
In turn, it results from \cref{p:1}\,\cref{p:1iii},
the $\SD$-invariance of $\tau$,
\cref{p:2}\,\cref{p:2i}, and
\cref{e:8gma} that
\begin{equation}
\label{e:l0bf}
\brk!{\forall i\in\set{0,1}}
(\forall j\in J)\quad
\scal{\sd_j\cdot\gamma(X_i)}{\gamma(Y)}
\leq\scal{\gamma(X_i)}{\gamma(Y)}
=\scal{X_i}{Y}.
\end{equation}
Combining this with property~\cref{d:2c} in \cref{d:2} and \cref{e:uvap}, we obtain
\begin{align}
\scal{X}{Y}
&\leq\scal{\gamma(X)}{\gamma(Y)}
\nonumber\\
&=\sum_{j\in J}\beta_j\brk2{
(1-\alpha)\scal{\sd_j\cdot\gamma(X_0)}{\gamma(Y)}
+\alpha\scal{\sd_j\cdot\gamma(X_1)}{\gamma(Y)}}
\nonumber\\
&\leq\sum_{j\in J}\beta_j\brk2{
(1-\alpha)\scal{X_0}{Y}+\alpha\scal{X_1}{Y}}
\nonumber\\
&=\scal{X}{Y}.
\end{align}
Hence
\begin{equation}
\label{e:03ru}
\scal{X}{Y}=\scal{\gamma(X)}{\gamma(Y)}
\end{equation}
and, thanks to \cref{e:l0bf},
\begin{equation}
\label{e:badv}
\brk!{\forall i\in\set{0,1}}
(\forall j\in J)\quad
\scal{\sd_j\cdot\gamma(X_i)}{\gamma(Y)}
=\scal{\gamma(X_i)}{\gamma(Y)}.
\end{equation}
It results from \cref{e:03ru} and \cref{p:3} that
$\AD_X\cap\AD_Y\neq\emp$, and
it thus remains to show that $\gamma(X)\in S$.
For every $i\in\set{0,1}$ and every $j\in J$,
using the $\SD$-invariance of $\varphi$,
\cref{e:badv}, and \cref{e:8gma},
we get
\begin{equation}
\varphi\brk!{\sd_j\cdot\gamma(X_i)}-\scal{\sd_j\cdot\gamma(X_i)}{\gamma(Y)}
=\varphi\brk!{\gamma(X_i)}-\scal{\gamma(X_i)}{\gamma(Y)}
=\min_{z\in\XX}\brk!{\varphi(z)-\scal{z}{\gamma(Y)}},
\end{equation}
which verifies that $\sd_j\cdot\gamma(X_i)\in S$.
Consequently, appealing to \cref{e:uvap}
and the convexity of $S$, we conclude that $\gamma(X)\in S$.

\cref{p:54ii}:
Recall that a convex subset $C$ of a real Hilbert space $\HH$ is a
singleton if and only if $\menge{\norm{x}}{x\in C}$ is a singleton
\cite[Proposition~3.7]{Livre1}.
Additionally, thanks to \cref{t:2},
$\menge{\norm{X}}{X\in\SX}=\menge{\norm{x}}{x\in S}$.

\cref{p:54iia}:
We derive from \cref{p:54i} that
\begin{align}
\text{$\SX$ is a singleton}
&\Leftrightarrow
\text{$\SX$ is convex and $\menge{\norm{X}}{X\in\SX}$
is a singleton}
\nonumber\\
&\Leftrightarrow
\text{$S$ is convex and $\menge{\norm{x}}{x\in S}$
is a singleton}
\nonumber\\
&\Leftrightarrow
\text{$S$ is a singleton.}
\end{align}

\cref{p:54iib}:
\cref{c:8} ensures that $\varphi\circ\gamma\in\Gamma_0(\FH)$.
Hence $\SX$ and $S$ are both convex as the sets of minimizers
of convex functions. Therefore,
\begin{align}
\text{$\SX$ is a singleton}
&\Leftrightarrow
\text{$\menge{\norm{X}}{X\in\SX}$ is a singleton}
\nonumber\\
&\Leftrightarrow
\text{$\menge{\norm{x}}{x\in S}$ is a singleton}
\nonumber\\
&\Leftrightarrow
\text{$S$ is a singleton,}
\end{align}
which completes the proof.
\end{proof}

\section{Bregman proximity operators of spectral functions}
\label{sec:5}

The classical proximity (or proximal point) operators
\cite{Moreau62} are the basic building blocks of modern first-order
nonsmooth optimization algorithms
\cite{Livre1,Beck17,ProxRespo,ClasonValkonen,Combettes2024-acnu}.
The notion of a Bregman proximity operator \cite{Bauschke03}
generalizes this fundamental concept beyond Hilbert spaces and
allows more algorithmic and computational flexibility in its
evaluation \cite{Bauschke17,Bauschke03b,Combettes16,Nguyen17}.
In this section, we employ the reduced minimization principle
of \cref{t:2} to devise explicit constructions for
Bregman proximal points of a spectral function by first
reducing their computation to that of the associated invariant
function and then identifying a suitable spectral decomposition;
see \cref{t:4}.
This extends and strengthens existing results in the
eigenvalue and singular value decomposition settings
of \cref{ex:4,ex:5}, and appears to be novel
in the context of \cref{ex:8,ex:7,ex:2,ex:3,ex:6}, even for
the case of classical proximity operators of convex spectral
functions; see \cref{r:9} for a detailed discussion.
More importantly, \cref{t:4} further illustrates the algorithmic
advantages of our framework through the introduction
of the spectral decomposition property \cref{d:2b} in \cref{d:2}.

To state our results, we first recall the notions of Legendre
functions and Bregman proximity operators in infinite-dimensional
spaces \cite{Bauschke01,Bauschke03}.
Let $\HH$ be a real Hilbert space.
Following \cite[Definition~5.2]{Bauschke01} for the case of real
Banach spaces, we say that a function $g\in\Gamma_0(\HH)$ is:
\begin{itemize}
\item
essentially smooth if $\partial g$ is both locally bounded and
single-valued on its domain.
\item
essentially strictly convex if
$(\partial g)^{-1}$ is locally bounded on its domain and
$g$ strictly convex on every convex subset of $\dom\partial g$.
\item
a Legendre function if it is both essentially smooth and essentially
strictly convex.
\end{itemize}
(An operator $M\colon\HH\to 2^{\HH}$
is locally bounded at a point $x\in\HH$
if there exists $\delta\in\RPP$ such that the set
$\bigcup_{z\in B(x;\delta)}Az$ is bounded.)
We shall require the following characterization
of essential smoothness and essential strict convexity from
\cite{Bauschke01}: For every $g\in\Gamma_0(\HH)$,
\begin{equation}
\label{e:esmo}
\text{$g$ is essentially smooth}
\quad\Leftrightarrow\quad
\begin{cases}
\intdom g\neq\emp\\
\text{$\partial g$ is single-valued on its domain}
\end{cases}
\end{equation}
and
\begin{equation}
\label{e:smod}
\text{$g$ is essentially strictly convex}
\quad\Leftrightarrow\quad
\text{$g^*$ is essentially smooth};
\end{equation}
see \cite[Theorems~5.6 and 5.4]{Bauschke01}, respectively.
Now let $g\in\Gamma_0(\HH)$ be a Legendre function.
The Bregman distance associated with $g$ is
\begin{equation}
\label{e:bdist}
\begin{aligned}
D_g\colon\HH\times\HH&\to\RPX\\
(y,x)&\mapsto
\begin{cases}
g(y)-g(x)-\scal{y-x}{\nabla g(x)}&\text{if}\,\,x\in\intdom g,\\
\pinf&\text{otherwise}.
\end{cases}
\end{aligned}
\end{equation}
Given a function $f\colon\HH\to\RX$ such that 
$(\dom f)\cap(\dom g)\neq\emp$,
the Bregman envelope of $f$ with respect to $g$ is
\begin{equation}
\label{e:benv}
\env_f^g\colon\HH\to\RXX\colon
x\mapsto\inf_{y\in\HH}\brk!{f(y)+D_g(y,x)},
\end{equation}
and the Bregman proximity operator
(or, in the terminology of \cite{Bauschke03}, $D$-prox operator)
of $f$ with respect to $g$ is
\begin{equation}
\label{e:bprox}
\begin{aligned}
\Prox_f^g\colon\HH&\to 2^{\HH}\\
x&\mapsto\Argmin\brk!{f+D_g(\Cdot,x)}
=\begin{cases}
\Argmin\brk!{f+g-\scal{\Cdot}{\nabla g(x)}}
&\text{if}\,\,x\in\intdom g,\\
\emp&\text{otherwise}.
\end{cases}
\end{aligned}
\end{equation}
In particular, given a subset $C$ of $\HH$ such that
$C\cap\dom g\neq\emp$, the Bregman distance to $C$ with respect to
$g$ is
\begin{equation}
  \dist_C^g=\env_{\iota_C}^g,
\end{equation}
and the Bregman projector onto $C$ with respect to $g$ is
\begin{equation}
  \Proj_C^g=\Prox_{\iota_C}^g.
\end{equation}
Finally, when $g=\norm{\Cdot}^2/2$, we shall omit the superscript
and simply write $\env_f$, $\Prox_f$, and $\Proj_C$.
(Note that $\dist_C^{\norm{\Cdot}^2/2}=d_C^2$, where
$d_C\colon x\mapsto\inf\norm{x-C}$ is the distance function to $C$,
thereby eliminates the need for simplifying
$\dist_C^{\norm{\Cdot}^2/2}$ to $\dist_C$.)

The evaluation of Bregman proximal points relies fundamentally
on the Legendre property of the auxiliary function. Accordingly, to
facilitate the reduction of such computation for
a spectral function to that of the associated invariant function,
we next characterize the class of spectral Legendre functions.

\begin{proposition}
  \label{p:7}
  Suppose that \cref{a:1} is in force and let $\varphi\in\Gamma_0(\XX)$
  be $\SD$-invariant. Then the following hold:
  \begin{enumerate}
    \item
      \label{p:7i}
      $\varphi\circ\gamma$ is essentially smooth if and only if
      $\varphi$ is essentially smooth.
    \item
      \label{p:7ii}
      $\varphi\circ\gamma$ is essentially strictly convex if and only if
      $\varphi$ is essentially strictly convex.
    \item
      \label{p:7iii}
      $\varphi\circ\gamma$ is a Legendre function if and only if
      $\varphi$ is a Legendre function.
  \end{enumerate}
\end{proposition}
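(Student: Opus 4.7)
The approach is to reduce each item to properties of the subdifferential already obtained in \cref{p:9}, and then to bootstrap from (p:7i) to (p:7ii) via Fenchel conjugation using \cref{c:6} and \cref{l:2}; part (p:7iii) follows at once because ``Legendre'' is by definition the conjunction of the two preceding notions.

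For (p:7i), the easy direction is: suppose $\varphi$ is essentially smooth and pick $X\in\FH$. Either $\partial\varphi(\gamma(X))=\emp$, in which case \cref{p:9}\,\cref{p:9ii} yields $\partial(\varphi\circ\gamma)(X)=\emp$, or $\partial\varphi(\gamma(X))$ is a singleton, and then \cref{p:9}\,\cref{p:9iii} yields that $\partial(\varphi\circ\gamma)(X)$ is a singleton. For the converse, assume $\varphi\circ\gamma$ is essentially smooth and let $x\in\XX$ with $\partial\varphi(x)\neq\emp$. Fix $\ad\in\AD$ and set $X=\Lambda_{\ad}x$; by \cref{e:1ae2} we have $\gamma(X)=\tau(x)\in\SD\cdot x$, so \cref{l:2}\,\cref{l:2ii} guarantees $\partial\varphi(\gamma(X))\neq\emp$. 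Then \cref{p:9}\,\cref{p:9ii} gives $\partial(\varphi\circ\gamma)(X)\neq\emp$, so the hypothesis forces it to be a singleton; \cref{p:9}\,\cref{p:9iii} transports this back to $\partial\varphi(\gamma(X))$, and \cref{l:2}\,\cref{l:2ii} transports it finally to $\partial\varphi(x)$.

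For (p:7ii) I would invoke Rockafellar's duality \cite[Theorem~26.3]{Rock70}: for a function in $\Gamma_0$, essential strict convexity is equivalent to essential smoothness of the Fenchel conjugate. Combined with \cref{c:6}, which yields $(\varphi\circ\gamma)^{*}=\varphi^{*}\circ\gamma$, and with \cref{l:2}\,\cref{l:2i}, which ensures that $\varphi^{*}\in\Gamma_0(\XX)$ is $\SD$-invariant and therefore eligible for part (p:7i), one chains
\begin{align}
\varphi\circ\gamma\,\,\text{ess.\ str.\ convex}
&\,\,\Leftrightarrow\,\,(\varphi\circ\gamma)^{*}\,\,\text{ess.\ smooth}\nonumber\\
&\,\,\Leftrightarrow\,\,\varphi^{*}\circ\gamma\,\,\text{ess.\ smooth}\nonumber\\
&\,\,\Leftrightarrow\,\,\varphi^{*}\,\,\text{ess.\ smooth}\nonumber\\
&\,\,\Leftrightarrow\,\,\varphi\,\,\text{ess.\ str.\ convex},
\end{align}
and (p:7iii) drops out by combining (p:7i) and (p:7ii).

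The main obstacle is the nontrivial direction of (p:7i): from the hypothesis that $\partial\varphi(x)\neq\emp$ at some unspecified $x\in\XX$, one must manufacture an $X\in\FH$ whose own subdifferential can be interrogated and then carry the single-valuedness property back to $x$ through the $\SD$-orbit. The combined use of \cref{l:2}\,\cref{l:2ii}, the spectral identity \cref{e:1ae2}, and the explicit formula in \cref{p:9}\,\cref{p:9ii} is what makes this work, after which (p:7ii) and (p:7iii) reduce cleanly to (p:7i) by abstract conjugate duality.
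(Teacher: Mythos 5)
Your proposal is correct and follows essentially the same route as the paper: part~\cref{p:7i} via \cref{p:9} and \cref{l:2}\,\cref{l:2ii} (your hard direction is verbatim the paper's argument), part~\cref{p:7ii} via the conjugate duality \cite[Theorem~26.3]{Rock70} combined with \cref{c:6} and \cref{l:2}\,\cref{l:2i}, and part~\cref{p:7iii} by conjunction. The only (harmless) deviation is in the easy direction of \cref{p:7i}, where you argue directly from \cref{p:9}\,\cref{p:9ii}--\cref{p:9iii} by cases, whereas the paper passes through differentiability of $\varphi$ at $\gamma(X)$ and \cref{c:3}; both are valid.
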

\begin{proof}
\cref{c:8} states that $\varphi\circ\gamma\in\Gamma_0(\FH)$.

\cref{p:7i}:
Suppose that $\varphi\circ\gamma$ is essentially smooth.
Thanks to \cref{e:esmo} and \cref{p:6}\,\cref{p:6iib},
$\intdom\varphi\neq\emp$.
Now take $x\in\dom\partial\varphi$ and $\ad\in\AD$.
By \cref{p:6}\,\cref{p:6i},
$\varphi=\varphi\circ\tau$. Thus,
in the spectral decomposition system
$(\XX,\SD,\tau,(\pi_{\sd})_{\sd\in\SD})$ 
for $\XX$ constructed in \cref{ex:9},
we deduce from \cref{p:9} that
\begin{equation}
\partial\varphi(x)
=\partial(\varphi\circ\tau)(x)
=\menge{\sd\cdot y}{y\in\partial\varphi\brk!{\tau(x)}
\,\,\text{and}\,\,
\sd\in\SD\,\,\text{such that}\,\,
x=\sd\cdot\tau(x)
},
\end{equation}
and from \cref{c:7} that
\begin{equation}
\label{e:ymdl}
\text{$\partial\varphi(x)$ is a singleton}
\quad\Leftrightarrow\quad
\text{$\partial\varphi\brk!{\tau(x)}$ is a singleton}.
\end{equation}
Therefore, because $\partial\varphi(x)$ is nonempty,
$\partial\varphi(\tau(x))=\partial\varphi(\gamma(\Lambda_{\ad}x))$
is likewise, where the equality follows from \cref{e:1ae2}.
This and \cref{p:9} imply that
$\partial(\varphi\circ\gamma)(\Lambda_{\ad}x)\neq\emp$.
Hence, the essential smoothness of
$\varphi\circ\gamma$ forces
$\partial(\varphi\circ\gamma)(\Lambda_{\ad}x)$ to be a singleton
(see \cref{e:esmo}) and, therefore, \cref{c:7} entails that
$\partial\varphi(\gamma(\Lambda_{\ad}x))
=\partial\varphi(\tau(x))$ is likewise.
As a result, it follows from \cref{e:ymdl} that $\partial\varphi(x)$
singleton. Thus, $\varphi$ is essentially smooth.
Conversely, suppose that $\varphi$ is essentially smooth and let
$X\in\dom\partial(\varphi\circ\gamma)$.
Thanks to \cref{e:esmo} and \cref{p:6}\,\cref{p:6iib},
$\intdom(\varphi\circ\gamma)\neq\emp$.
On the other hand, \cref{p:9} implies that
$\gamma(X)\in\dom\partial\varphi$ and, in turn,
\cref{e:esmo} entails that $\partial\varphi(\gamma(X))$ is a
singleton. Therefore, we infer from
\cref{c:7} that $\partial(\varphi\circ\gamma)(X)$ is also a
singleton and conclude via \cref{e:esmo} that
$\varphi\circ\gamma$ is essentially smooth.

\cref{p:7ii}:
We deduce from \cref{c:2} and a consequence of the
Fenchel--Moreau biconjugation theorem
(\cite[Corollary~13.38]{Livre1}) that $\varphi^*$ is an
$\SD$-invariant function in $\Gamma_0(\XX)$.
Thus, on account of \cref{e:smod}, \cref{p:8}, and \cref{p:7i}, we
obtain
\begin{align}
\text{$\varphi\circ\gamma$ is essentially strictly convex}
&\Leftrightarrow
\text{$\varphi^*\circ\gamma=(\varphi\circ\gamma)^*$ is essentially
smooth}
\nonumber\\
&\Leftrightarrow
\text{$\varphi^*$ is essentially smooth}
\nonumber\\
&\Leftrightarrow
\text{$\varphi$ is essentially strictly convex},
\end{align}
as claimed.

\cref{p:7iii}:
Combine \cref{p:7i,p:7ii}.
\end{proof}

We are now in a position to express Bregman envelopes and proximity
operators of a spectral function in terms of those of the
associated invariant function.
We emphasize that item \cref{t:4ii} in \cref{t:4}
provides a complete, \emph{constructive}
description of the set-valued operator
$\Prox_{\varphi\circ\gamma}^{\psi\circ\gamma}$
by lifting $\Prox_{\varphi}^{\psi}$
from the smaller space $\XX$ to $\FH$ via
the spectral decomposition property \cref{d:2b} in \cref{d:2}.
To the best of our knowledge, this result is novel, even in the
well-studied Hermitian matrix setting of \cref{ex:4}.

\begin{theorem}
\label{t:4}
Suppose that \cref{a:1} is in force. Let $\varphi\colon\XX\to\RX$ be a
proper $\SD$-invariant function, let $\psi\in\Gamma_0(\XX)$ be an
$\SD$-invariant Legendre function such that
$(\dom\varphi)\cap(\dom\psi)\neq\emp$, and let $X\in\FH$.
Then the following hold:
\begin{enumerate}
\item
\label{t:4i}
$\env_{\varphi\circ\gamma}^{\psi\circ\gamma}X
=\brk{\env_{\varphi}^{\psi}}(\gamma(X))$.
\item
\label{t:4ii}
$\Prox_{\varphi\circ\gamma}^{\psi\circ\gamma}X
=\menge{\Lambda_{\ad}z}{
z\in\Prox_{\varphi}^{\psi}\gamma(X)
\,\,\text{and}\,\,\ad\in\AD_X}$.
\item
\label{t:4iii}
For every $Z\in\FH$,
\begin{equation}
\label{e:utib}
Z\in\Prox_{\varphi\circ\gamma}^{\psi\circ\gamma}X
\quad\Leftrightarrow\quad
\begin{cases}
\gamma(Z)\in\Prox_{\varphi}^{\psi}\gamma(X)\\
(\exi\ad\in\AD)\,\,
X=\Lambda_{\ad}\gamma(X)\,\,\text{and}\,\,
Z=\Lambda_{\ad}\gamma(Z).
\end{cases}
\end{equation}
\item
\label{t:4iv}
Suppose that one of the following is satisfied:
\begin{enumerate}
\item
\label{t:4iva}
$\XX$ is finite-dimensional.
\item
\label{t:4ivb}
$\varphi\in\Gamma_0(\XX)$.
\end{enumerate}
Then $\Prox_{\varphi\circ\gamma}^{\psi\circ\gamma}X$ is a singleton
if and only if $\Prox_{\varphi}^{\psi}\gamma(X)$ is a singleton.
\end{enumerate}
\end{theorem}
\begin{proof}
\cref{p:7}\,\cref{p:7iii} asserts that $\psi\circ\gamma$ is a Legendre
function, while \cref{p:6}\,\cref{p:6iia} states that
$\intdom(\psi\circ\gamma)=\gamma^{-1}(\intdom\psi)$.
We assume henceforth that
\begin{equation}
\gamma(X)\in\intdom\psi
\end{equation}
since otherwise the assertions are clear from the definition
(see \cref{e:bdist,e:benv,e:bprox}).
Then $\psi$ is Gateaux differentiable at $\gamma(X)$
\cite[Theorem~5.6]{Bauschke01} and \cref{p:32} thus entails that
$\psi\circ\gamma$ is Gateaux differentiable at $X$.
In turn, upon setting
\begin{equation}
\label{e:783g}
Y=\nabla(\psi\circ\gamma)(X)
\quad\text{and}\quad
y=\nabla\psi\brk!{\gamma(X)},
\end{equation}
we get from \cref{p:32} that
\begin{equation}
\label{e:nkgu}
\gamma(Y)=y
\quad\text{and}\quad
(\forall\ad\in\AD_X)\,\,
Y=\Lambda_{\ad}y=\Lambda_{\ad}\gamma(Y).
\end{equation}
Hence, by \cref{p:3},
\begin{equation}
\label{e:0yfe}
\scal{X}{Y}=\scal{\gamma(X)}{\gamma(Y)}=\scal{\gamma(X)}{y}.
\end{equation}
We claim that
\begin{equation}
\label{e:gxgy}
\AD_X=\AD_Y.
\end{equation}
Indeed, it results from \cref{e:nkgu} that $\AD_X\subset\AD_Y$.
Now take $\bd\in\AD_Y$ and set $W=\Lambda_{\bd}\gamma(X)$.
\cref{p:2}\,\cref{p:2i} implies that $\gamma(W)=\gamma(X)$ and,
therefore, that
\begin{equation}
\label{e:qb5z}
\Lambda_{\bd}\gamma(W)
=\Lambda_{\bd}\gamma(X)
=W
\end{equation}
Thus $\bd\in\AD_W$. Additionally, we have
\begin{equation}
\partial\psi\brk!{\gamma(W)}
=\partial\psi\brk!{\gamma(X)}
=\set!{\nabla\psi\brk!{\gamma(X)}},
\end{equation}
where the last equality follows from the Gateaux
differentiability of $\psi$ at $\gamma(X)$.
Hence, on account of \cref{p:9},
$\Lambda_{\bd}\brk{\nabla\psi\brk{\gamma(X)}}
\in\partial(\psi\circ\gamma)(W)$
and, because $\psi\circ\gamma$ is a Legendre function, we must have
$\nabla(\psi\circ\gamma)(W)
=\Lambda_{\bd}\brk{\nabla\psi\brk{\gamma(X)}}$.
In turn, since $\bd\in\AD_Y$, we derive from \cref{e:783g} and
\cref{e:nkgu} that
\begin{equation}
\nabla(\psi\circ\gamma)(W)
=\Lambda_{\bd}\gamma(Y)
=Y
=\nabla(\psi\circ\gamma)(X).
\end{equation}
On the other hand, since $\psi\circ\gamma$ is a Legendre function,
$\nabla(\psi\circ\gamma)\colon\intdom(\psi\circ\gamma)\to
\intdom(\psi\circ\gamma)^*$ is a bijection
\cite[Theorem~5.10]{Bauschke01}.
Thus $X=W=\Lambda_{\bd}\gamma(X)$ and, consequently,
$\bd\in\AD_X$.

\cref{t:4i}:
We derive from \cref{e:bdist}, \cref{e:783g},
\cref{t:2} (applied to the proper $\SD$-invariant function
$\varphi+\psi$),
\cref{e:nkgu}, and \cref{e:0yfe} that
\begin{align}
\env_{\varphi\circ\gamma}^{\psi\circ\gamma}X
&=\inf_{Z\in\FH}\brk!{\varphi\brk!{\gamma(Z)}+D_{\psi\circ\gamma}(Z,X)}
\nonumber\\
&=\inf_{Z\in\FH}\brk!{
\varphi\brk!{\gamma(Z)}+\psi\brk!{\gamma(Z)}
-\psi\brk!{\gamma(X)}-\scal{Z}{Y}+\scal{X}{Y}}
\label{e:mnq1}\\
&=\inf_{z\in\XX}\brk!{\varphi(z)+\psi(z)-\psi\brk!{\gamma(X)}
-\scal{z}{y}+\scal{\gamma(X)}{y}}
\label{e:mnq2}\\
&=\brk!{\env_{\varphi}^{\psi}}\brk!{\gamma(X)},
\end{align}
as desired.

\cref{t:4ii}--\cref{t:4iv}:
On account of \cref{e:bprox} and \cref{e:783g}, we have
\begin{equation}
\label{e:bxy1}
\Prox_{\varphi\circ\gamma}^{\psi\circ\gamma}X
=\Argmin\brk!{\varphi\circ\gamma+\psi\circ\gamma-\scal{\Cdot}{Y}}
\end{equation}
and, by using \cref{e:nkgu},
\begin{equation}
\label{e:bxy2}
\Prox_{\varphi}^{\psi}\gamma(X)
=\Argmin\brk!{\varphi+\psi-\scal{\Cdot}{y}}
=\Argmin\brk!{\varphi+\psi-\scal{\Cdot}{\gamma(Y)}}.
\end{equation}
Hence, in the light of \cref{e:gxgy},
\cref{t:4ii,t:4iii} follow from \cref{t:2},
and \cref{t:4iv} follows from \cref{p:54}\,\cref{p:54ii}
(applied to the function $\varphi+\psi$).
\end{proof}

\begin{remark}
\label{r:9}
Let us relate \cref{t:4} to existing works.
\begin{enumerate}
\item
Consider the special case of \cref{t:4} where
$\varphi\in\Gamma_0(\XX)$ and $\psi=\norm{\Cdot}^2/2$.
\begin{itemize}
\item
In the context of \cref{ex:1}, \cref{t:4}\,\cref{t:4ii} reduces to
\cite[Theorem~6.18]{Beck17}.
\item
In the eigenvalue and singular value decomposition settings of
\cref{ex:4,ex:5}, respectively, where $\KB$ is $\RR$ or $\CC$,
we recover from items
\cref{t:4ii} and \cref{t:4ivb} in \cref{t:4} well-known expressions
for the proximity operator of a lower semicontinuous convex
spectral function; see, for instance,
\cite[Corollary~24.65 and Proposition~24.68]{Livre1} and
\cite[Theorems~7.18 and 7.29]{Beck17}.
\item
In the singular value decomposition framework for matrices
of quaternions, that is, \cref{ex:5} where $\KB=\HB$, concrete
instantiations of \cref{t:4}\,\cref{t:4ii} arising in
machine learning applications can be found in \cite{Chan16,Chen19}.
\item
To the best of our knowledge, \cref{t:4} is new in the context of
\cref{ex:8bis,ex:8,ex:7,ex:2,ex:3,ex:6}.
\end{itemize}
\item
In the eigenvalue decomposition setting of \cref{ex:4} with $\KB=\RR$,
\cite[Corollary~1]{Benfenati20} follows from items
\cref{t:4ii} and \cref{t:4ivb} in \cref{t:4},
while \cite[Theorem~4.1]{Drus18} follows from items \cref{t:4i} and
\cref{t:4ii} in \cref{t:4} with $\psi=\norm{\Cdot}^2/2$.
\item
Concrete examples of proximity operators of functions on matrix
spaces can be found in
\cite{Livre1,Beck17,Benfenati20,ProxRespo}.
\end{enumerate}
\end{remark}

We now present for the first time an explicit formula for the
proximity operators of Fourier-phase-invariant functions.
Such a formula has thus far been explored only in the case
of projections onto magnitude-based constraint sets in phase
retrieval applications.

\begin{example}[Fourier transform]
\label{ex:86}
Consider the setting of \cref{ex:8}.
Let $\varphi\colon L^2(\RR;\RR)\to\RX$ be proper 
and invariant under pointwise sign-changes in the sense that
\begin{equation}
\brk!{\forall x\in L^2(\RR;\RR)}\quad
\varphi(x)=\varphi\brk{\abs{x}},
\end{equation}
and define
\begin{equation}
\Phi\colon L^2(\RR;\CC)\to\RX\colon f\mapsto
\varphi\circ\abs!{\widehat{f}}.
\end{equation}
Then
\begin{equation}
\brk!{\forall f\in L^2(\RR;\CC)}\quad
\Prox_{\Phi}f
=\menge{\mathfrak{F}^{-1}\brk!{\ee^{\ii\theta}z}}{
z\in\Prox_{\varphi}\abs!{\widehat{f}}
\,\,\text{and}\,\,
\theta\in\AD\,\,\text{with}\,\,
\widehat{f}=\ee^{\ii\theta}\abs!{\widehat{f}}}.
\end{equation}
Particular instances of this formula that manifest themselves in
phase retrieval problems are
\cite[Theorem~4.2]{Luke02} (see also
\cite[Example~3.6]{Bauschke02})
and \cite[Theorem~4\,(1), (6), and (8)]{Youla82}.
\end{example}

As a further illustration \cref{t:4}, we provide an explicit
formula for the proximity operator of block-radial functions
without requiring a separable structure of the underlying function.
This class includes, in particular, the $\ell_{2,p}$ mixed norms,
which are prevalent in multi-task learning and statistics
\cite{Argyriou08,Gramfort12,Yuan06}.

\begin{example}[block-radial function]
\label{ex:8bis-prox}
Consider the setting of \cref{ex:8bis} and, for every $i\in I$,
denote by $\SD_i$ the unit sphere of $\HS_i$.
Let $\varphi\colon\ell^2(I)\to\RX$ be proper 
and invariant under component-wise sign-changes in the sense that
\begin{equation}
\brk!{\forall(\xi_i)_{i\in I}\in\ell^2(I)}\quad
\varphi(\xi_i)_{i\in I}=\varphi\brk!{\abs{\xi_i}}_{i\in I},
\end{equation}
define
\begin{equation}
\Phi\colon\HH\to\RX\colon(\mathsf{u}_i)_{i\in I}\mapsto
\varphi\brk!{\norm{u_i}_{\HS_i}}_{i\in I},
\end{equation}
and let $(\mathsf{u}_i)_{i\in I}\in\HH$.
Then
\begin{equation}
\Prox_{\Phi}(\mathsf{u}_i)_{i\in I}=
\menge{(\zeta_i\mathsf{v}_i)_{i\in I}}{
(\zeta_i)_{i\in I}\in\Prox_{\varphi}
\brk!{\norm{\mathsf{u}_i}_{\HS_i}}_{i\in I}\,\,\text{and}\,\,
(\forall i\in I)\,\,\mathsf{v}_i\in\Proj_{\SD_i}\mathsf{u}_i}.
\end{equation}
\end{example}

By specializing \cref{t:4} to indicator functions,
we obtain explicit expressions for Bregman distances and
Bregman projectors onto spectral sets in terms of those of the
associated invariant sets.

\begin{corollary}
\label{c:5}
Suppose that \cref{a:1} is in force. Let $D$ be a nonempty
$\SD$-invariant subset of $\XX$,
let $\psi\in\Gamma_0(\XX)$ be an $\SD$-invariant Legendre function such
that $D\cap\dom\psi\neq\emp$, and let $X\in\FH$.
Then the following hold:
\begin{enumerate}
\item
\label{c:5i}
$\dist_{\gamma^{-1}(D)}^{\psi\circ\gamma}(X)
=\dist_{D}^{\psi}(\gamma(X))$.
\item
\label{c:5ii}
$\Proj_{\gamma^{-1}(D)}^{\psi\circ\gamma}X
=\menge{\Lambda_{\ad}z}{z\in\Proj_D^{\psi}\gamma(X)\,\,\text{and}\,\,
\ad\in\AD_X}$.
\item
\label{c:5iii}
For every $Z\in\FH$,
\begin{equation}
Z\in\Proj_{\gamma^{-1}(D)}^{\psi\circ\gamma}X
\quad\Leftrightarrow\quad
\begin{cases}
\gamma(Z)\in\Proj_D^{\psi}\gamma(X)\\
(\exi\ad\in\AD)\,\,
X=\Lambda_{\ad}\gamma(X)\,\,\text{and}\,\,
Z=\Lambda_{\ad}\gamma(Z).
\end{cases}
\end{equation}
\item
\label{c:5iv}
Suppose that one of the following is satisfied:
\begin{enumerate}
\item
\label{c:5iva}
$\XX$ is finite-dimensional.
\item
\label{c:5ivb}
$D$ is closed and convex.
\end{enumerate}
Then $\Proj_{\gamma^{-1}(D)}^{\psi\circ\gamma}X$ is a singleton if
and only if $\Proj_D^{\psi}\gamma(X)$ is a singleton.
\end{enumerate}
\end{corollary}
\begin{proof}
Apply \cref{t:4} to the proper $\SD$-invariant function $\iota_D$,
and observe that $\iota_{\gamma^{-1}(D)}=\iota_D\circ\gamma$.
\end{proof}

We end this section with an illustration of \cref{c:5},
which provides a closed-form expression for the projector onto the
intersection of the positive semidefinite cone and a rank
constraint in Euclidean Jordan algebras,
which extends a known formula in the real and complex matrix
settings.

\begin{example}[Euclidean Jordan algebra]
  \label{ex:14}
  Consider the setting of \cref{ex:3}.
  For every $X\in\FH$, we define the \emph{rank of $X$},
  in symbol $\Rank X$, to be the number of nonzero entries of
  the vector $\lambda(X)$ of eigenvalues of $X$.
  Now let $r\in\set{1,\ldots,N-1}$, set
  \begin{equation}
    \DD=\menge{X\in\FH}{\lambda(X)\in\RP^N\,\,\text{and}\,\,
      \Rank X\leq r},
  \end{equation}
  let $X\in\FH$, and recall that (see \cref{e:ax,e:cwpw})
  $\AD_X$ is the set of Jordan frames
  $(A_i)_{1\leq i\leq N}$ in $\FH$ for which
  $X=\sum_{i=1}^N\lambda_i(X)A_i$.
  Furthermore, denote by $\PS_X^N$ the set of $N\times N$ permutation
  matrices that fix $\lambda(X)$, that is,
  \begin{equation}
    \PS_X^N=\menge{P\in\PS^N}{\lambda(X)=P\lambda(X)}.
  \end{equation}
  Then exactly one of the following holds:
  \begin{enumerate}
    \item\label{ex:14i} $(\exi i\in\set{1,\ldots,N})$
      $\lambda_i(X)\geq 0$:
      Set
      \begin{equation}
        q=\max\menge{i\in\set{1,\ldots,N}}{\lambda_i(X)\geq 0}
        \quad\text{and}\quad
        \overline{x}=\brk!{\lambda_1(X),\ldots,\lambda_{\min\set{q,r}}(X),0,\ldots,0}.
      \end{equation}
      Then
      \begin{equation}
        \Proj_{\DD}X
        =\Menge4{\sum_{i=1}^N\zeta_iA_i}{
          (A_i)_{1\leq i\leq N}\in\AD_X,\,\,
          P\in\PS_X^N,\,\,
          (\zeta_i)_{1\leq i\leq N}=P\overline{x}}.
      \end{equation}
    \item\label{ex:14ii} $(\forall i\in\set{1,\ldots,N})$
      $\lambda_i(X)<0$:
      Then $\Proj_{\DD}X=\set{0}$.
  \end{enumerate}
\end{example}
\begin{proof}
  Define
  \begin{equation}
    D=\menge{y\in\RP^N}{\norm{y}_0\leq r},
  \end{equation}
  where $\norm{y}_0$ denotes the number of nonzero entries of a vector
  $y\in\RR^N$.
  Then $D$ is $\PS^N$-invariant and $\DD=\lambda^{-1}(D)$.
  Thus, using \cref{e:cwpw}, we deduce from
  \cref{c:5}\,\cref{c:5ii} applied to the Legendre function
  $\psi=\norm{\Cdot}^2/2$ that
  \begin{equation}
    \Proj_{\DD}X
    =\Menge4{\sum_{i=1}^N\zeta_iA_i}{
      (\zeta_i)_{1\leq i \leq N}\in\Proj_D\lambda(X)\,\,\text{and}\,\,
      (A_i)_{1\leq i\leq N}\in\AD_X},
  \end{equation}
  and it therefore suffices to determine $\Proj_D\lambda(X)$.
  Towards this goal, take $z\in D$ and write (see \cref{n:1})
  \begin{equation}
    \label{e:3gpg}
    z^{\downarrow}=(\pi_i)_{1\leq i\leq N}.
  \end{equation}
  Since $\pi_1\geq\cdots\geq\pi_N\geq 0$ and
  $\norm{z^{\downarrow}}_0=\norm{z}_0\leq r$, we must have
  \begin{equation}
    \label{e:4ahy}
    \pi_1\geq\cdots\geq\pi_r\geq 0=\pi_{r+1}=\cdots=\pi_N.
  \end{equation}

  \cref{ex:14i}:
  Let us consider two cases.

  (a) $q<r$:
  The very definition of $q$ yields
  \begin{equation}
    \label{e:trlh}
    \brk!{\forall i\in\set{q+1,\ldots,r}}\quad\lambda_i(X)<0.
  \end{equation}
  Hence, we derive from the Hardy--Littlewood--P\'olya rearrangement
  inequality that
  \begin{align}
    \norm{z-\lambda(X)}^2
    &\geq\norm!{z^{\downarrow}-\lambda(X)}^2
    \nonumber\\
    &=\sum_{i=1}^q\brk!{\pi_i-\lambda_i(X)}^2
      +\sum_{i=q+1}^r\brk!{\pi_i-\lambda_i(X)}^2
      +\sum_{i=r+1}^N\lambda_i(X)^2
    \nonumber\\
    &\geq\sum_{i=q+1}^r\brk!{\pi_i^2
      -\underbrace{2\pi_i\lambda_i(X)}_{\leq 0}+\lambda_i(X)^2}
      +\sum_{i=r+1}^N\lambda_i(X)^2
    \nonumber\\
    &\geq\sum_{i=q+1}^N\lambda_i(X)^2.
  \end{align}
  In turn, appealing to the condition for equality in the
  rearrangement inequality, we obtain
  \begin{align}
    \norm{z-\lambda(X)}^2=\sum_{i=q+1}^N\lambda_i(X)^2
    &\Leftrightarrow
      \begin{cases}
        \norm{z-\lambda(X)}=\norm!{z^{\downarrow}-\lambda(X)}\\
        \Sum_{i=1}^q\brk!{\pi_i-\lambda_i(X)}^2
        =\Sum_{i=q+1}^r\pi_i^2
        =\Sum_{i=q+1}^r\pi_i\lambda_i(X)=0
      \end{cases}
    \nonumber\\
    &\Leftrightarrow
      \begin{cases}
        \brk!{\exi P\in\PS_X^N}\,\,z=Pz^{\downarrow}\\
        \brk!{\forall i\in\set{1,\ldots,q}}\,\,\pi_i=\lambda_i(X)\\
        \brk!{\forall i\in\set{q+1,\ldots,r}}\,\,\pi_i=0
      \end{cases}
    \nonumber\\
    &\Leftrightarrow
      \brk!{\exi P\in\PS_X^N}\,\,z
      =P\brk!{\lambda_1(X),\ldots,\lambda_q(X),0,\ldots,0}
      =P\overline{x},
  \end{align}
  which entails that
  $\Proj_D\lambda(X)=\menge{P\overline{x}}{P\in\PS_X^N}$.
  
  (b) $q\geq r$: Argue as in case (a).

  \cref{ex:14ii}:
  Upon setting $(\zeta_i)_{1\leq i\leq N}=z$, we deduce that
  \begin{equation}
    \norm{z-\lambda(X)}^2
    =\sum_{i=1}^N\brk!{\zeta_i-\lambda_i(X)}^2
    =\sum_{i=1}^N\brk!{\zeta_i^2
      -\underbrace{2\zeta_i\lambda_i(X)}_{\leq 0}
      +\lambda_i(X)^2}
    \geq\sum_{i=1}^N\lambda_i(X)^2
  \end{equation}
  and, in turn, that
  \begin{equation}
    \norm{z-\lambda(X)}^2=\sum_{i=1}^N\lambda_i(X)^2
    \quad\Leftrightarrow\quad
    \sum_{i=1}^N\zeta_i^2=\sum_{i=1}^N\zeta_i\lambda_i(X)=0
    \quad\Leftrightarrow\quad
    \brk!{\forall i\in\set{1,\ldots,N}}\;\;
    \zeta_i=0.
  \end{equation}
  Thus $\Proj_D\lambda(X)=\set{0}$.
\end{proof}

\begin{remark}
  \label{r:2}
  Here are several comments on \cref{ex:14}.
  \begin{enumerate}
    \item Set
      \begin{equation}
        \mathscr{R}=\menge{X\in\FH}{\Rank X\leq r}
        \quad\text{and}\quad
        \mathscr{C}=\menge{X\in\FH}{\lambda(X)\in\RP^N}.
      \end{equation}
      Along the same lines of the proof of \cref{ex:14},
      one can show that
      \begin{equation}
        \Proj_{\DD}X
        =\Proj_{\mathscr{R}}\brk!{\Proj_{\mathscr{C}}X}.
      \end{equation}
      In general, nevertheless, given nonempty closed subsets $R$ and $C$
      of a real Hilbert space,
      \begin{equation}
        \Proj_{R\cap C}\neq\Proj_R\circ\Proj_C.
      \end{equation}
    \item Choosing $\FH$ to be the Euclidean Jordan algebra of Hermitian
      matrices (see \cref{ex:4}), we obtain an expression for the
      projector onto the set of positive semidefinite matrices of rank
      at most $r$, that is, the set
      \begin{equation}
        \menge{X\in\HS^N(\KB)}{\lambda(X)\in\RP^N\,\,\text{and}\,\,
          \Rank X\leq r},
      \end{equation}
      which is of interest in low-rank factor analysis
      \cite{Bertsimas2017}.
      Such expression is known in the real and complex cases
      (see, e.g., \cite[Lemma~6.1]{Clarkson2017}) but appears to
      be new in the quaternion case.
  \end{enumerate}
\end{remark}

\section{Conclusion}

We have introduced an abstract framework of spectral decomposition
systems that encompasses a wide array of settings
such as classical spectral functions on matrix spaces and Euclidean
Jordan algebras, Fourier-phase-invariant functions,
and block-radial functions.
By explicitly incorporating embedding operators and the spectral
decomposition property, we have established a reduced minimization
principle that enables the constructive evaluation of conjugates,
subgradients, and Bregman proximity operators of
spectral functions in terms of the associated invariant
functions.
These results unify and extend corresponding results in the
individual settings previously studied in isolation, and
the result for set-valued Bregman proximity operators is new, even
in the case of Hermitian matrices.

In the follow-up work \cite{PartII},
we establish corresponding formulae for several generalized
subgradients of nonconvex spectral functions
in the finite-dimensional setting and leverage these results to
extend Lidski\u{\i}'s theorem on the
spectrum of additive perturbations of Hermitian matrices to
finite-dimensional spectral decomposition systems.

Future work will be concerned with applying these representations
for proximal point and splitting algorithms
\cite{Livre1,Beck17,ClasonValkonen} and with extending our results
to objects of second-order variational analysis such as
\cite{Sarabi25}.

\newpage

\bibliographystyle{jnsao}
\bibliography{bibli}

\end{document}